\documentclass[12pt,a4paper]{amsart}
\usepackage{amssymb}
\usepackage{amsmath}
\usepackage{amsthm}
\usepackage{mathrsfs}
\usepackage{epsfig}
\usepackage{color}

\newtheorem{proposition}{Proposition}
\newtheorem{remark}{Remark}
\numberwithin{equation}{section}

%newtheorem{proposition}{Proposition}

\def\Z{\mathbb Z}
\theoremstyle{plain}

\newcommand{\R}{\mathbb{R}}
\newcommand{\N}{\mathbb{N}}
\newcommand{\T}{\mathbb{T}}

\DeclareMathOperator\supp{supp}

\def\D{\,\mathrm d}
\def\e{\mathrm e}

\def\supp{\mathop{\rm supp}\nolimits}

\def\spn{\mathop{\rm span}\nolimits}
\def\dim{\mathop{\rm dim}\nolimits}

\def\D{\mathrm{d}}

\topmargin=0cm \hoffset=-1cm \textwidth 160mm \textheight 22cm
\evensidemargin=5mm \oddsidemargin=5mm

\begin{document}

\title[Convolution and product in finitely generated shift-invariant spaces]{Convolution and product in finitely generated shift-invariant spaces}

\author[A. Aksentijevi\' c ]{Aleksandar Aksentijevi\' c }
\address{University of Kragujevac,
Faculty of Science, Department of Mathematics and Informatics, Radoja Domanovi\' ca 12,
34000 Kragujevac,
Serbia}
\email{aksentijevic@kg.ac.rs}

\author[S. Aleksi\' c]{Suzana Aleksi\' c}
\address{University of Kragujevac,
Faculty of Science, Department of Mathematics and Informatics, Radoja Domanovi\' ca 12,
34000 Kragujevac,
Serbia}
\email{suzana.aleksic@pmf.kg.ac.rs}

\author[S. Pilipovi\' c]{Stevan Pilipovi\'{c}}
\address{University of Novi Sad, Faculty of Sciences, Department of Mathematics and Informatics,  Trg Dositeja
Obradovica 4, 21000 Novi Sad, Serbia}
\email{pilipovic@dmi.uns.ac.rs}

\maketitle

\begin{abstract}
We discuss some structural properties of finitely generated shift-invariant (FGSI) spaces and subspaces of Sobolev spaces, particularly those related to convolution and the product within these spaces. We find shift-invariant solutions in FGSI spaces for a class of differential-difference equations with constant coefficients. Additionally, we analyze the Fourier multipliers in  FGSI spaces and the wave fronts for the convolution and product in FGSI spaces.
\end{abstract}

\maketitle

%%%%%% THIS PART MUST BE PLACED IMMEDIATELY AFTER THE \maketitle COMMAND
%%%%%% BACK TO ORIGINAL FOOTNOTES
\makeatletter
\renewcommand\@makefnmark%
{\mbox{\textsuperscript{\normalfont\@thefnmark)}}}
\makeatother
%%%%%%

\section{Introduction}

Following the approach of \cite{AST}--\cite{BVR2}, \cite{R}, to shift-invariant subspaces of the $L^2=L^2(\mathbb R^d)$ space,
later developed by many authors (see  \cite{A1}, \cite{A2}, \cite{MB}, \cite{BR},
and \cite{ss}), in this paper, we investigate mainly the convolution in
 shift-invariant  subspaces of Sobolev spaces $H^s=H^s(\mathbb R^d)$, $s\in\mathbb R,$ denoted as $V_s$. These subspaces are generated by a finite family of generators,
$\{\phi_1,\ldots,\phi_m\}\subset H^s$, such that
\[V_s(\phi_1,\ldots,\phi_m)=\Big\{f\in H^s: f=\sum_{i=1}^m\sum_{k\in\Z^d}c^i_k\phi_i(\cdot-k)\Big\}\] is the closure of the span of integer translations of the functions $\phi_1,\ldots,\phi_m$, where
$m\in\N$. 

In our investigations, we combine the approaches of \cite{BVR1} and \cite{AST} in case $s=0$ and deal with the shift-invariant subspaces $V_s$ of $H^s$ and $\mathcal V_s$ of $L^2_s$.  Actually, for $s\geqslant0$, the additional assumptions in the generators $\phi_1,\ldots,\phi_m$ (condition $(A)$ below)  imply that $V_s=\mathcal V_s$. In this way, we simplify our analysis of convolution in such spaces. Our results are of different nature in comparison to the ones of \cite{AST2}, where the convolution/deconvolution problem (a bounded or non-bounded inverse for the convolution) related to the sampling theory was studied mainly by the use of Gramian matrices. Also, our approach is different to that one of \cite{jia} where the solvability of linear operator equations is considered with applications to local shift-invariant spaces (see \cite{AS}, for example). We are interested in relations between various $V_{s}$ spaces related to the convolution inside such spaces, and mainly in the Fourier multipliers for such spaces, which are also shift-invariant. The structure of convolutors and the continuity properties are investigated. In addition, the product and the wave fronts for both are. With our approach, we also have results related to deconvolution, the solution of a convolution equation (differential-difference equation in Proposition \ref{eh}), or the deconvolution of an equation in which the generators are of the form $\mathcal F^{-1}(a)*\phi_i$, $i=1,\ldots,m$, where $a$ is a Michlin-type Fourier multiplier (Remark \ref{R11}).

The reader can see more about the spaces $V_s$ in \cite{aap1}--\cite{aap3}.
In the case $s=0$, one arrives at the well-known theory in $L^2$,  the space $V = V_0$.
The structure of shift-invariant spaces $V$ in Euclidean space $\R^d$ is well-established and has been widely applied in approximation theory, sampling theory, and wavelet theory.
The basic papers \cite{BVR1} and \cite{BVR2} focus on the
multivariate approximation properties of closed shift-invariant spaces, providing approximating families in $L^2$ that are
constructed by shifts of a well-understood set of functions.
Since the publication of these papers, interest in such spaces has continued to grow. The structural properties of shift-invariant spaces and
shift-preserving operators are further developed in
\cite{MB} and \cite{BR} through the analysis of the range function, which contributes to the relevant theory of subspaces of $\ell^2$, frames
and Riesz families through Gramians and dual Gramians. In \cite{AST}, these spaces are related to $L^p$ theory in such a way that the closedness of these spaces determines the frame properties of their generators. Aldroubi and his collaborators contributed much to the analysis and applications of FGSI spaces in various directions, investigating convolution \cite{AST2}  and locally finite-dimensional shift-invariant spaces \cite{AS}, always motivated by sampling theory. In several papers, we consider such spaces for $p=2$ in the Sobolev space framework and denote them as $\mathcal V_s$ (\cite{aap1}). From the perspective of sampling theory, an important paper is \cite{AG}, which relates nonuniform sampling to these spaces. Furthermore, deep extensions were made in \cite{A1} and \cite{A2}, where the authors further explore the diagonalization of finitely generated shift-invariant spaces (FGSI spaces) into a finite sum of closed, shift-invariant subspaces, on which the action of shift-invariant operators is well understood. Moreover, they characterized
shift-invariant operators whose finite iterations generate such spaces.
The H\"ormander result related to shift-invariant operators from $L^p$ into $L^q$ spaces, as well as the theory of multipliers (thus a class of pseudodifferential operators), is used in the analysis of FGSI spaces in connection with convolution and products connected by convolution via the Fourier transform.

After reviewing known results on periodic distributions and sequence spaces, we recall the definition and properties of wave front sets in Section 2. In Section 3, we provide a structural characterization of elements in the dual of an FGSI space, using the fact that $V_s(\phi_1,\ldots,\phi_m)=\mathcal V_s(\phi_1,\ldots,\phi_m)$. This fact is also used throughout the remaining sections. Section 4 is devoted to the relations of FGSI spaces. The main part of the paper is Section~5, where the convolution in FGSI spaces is investigated. In this section, applications of some classical results are presented. It is shown that a certain class of equations with constant coefficients and delays, with a right-hand side in an FGSI space, has a solution in a certain FGSI space.
We conclude Section 5 by characterizing the convolutors (Fourier multipliers) for FGSI spaces, studying the product, and determining the wave fronts for the convolution and the product of elements in FGSI spaces.

 \section{Notation}

 For distribution spaces and the properties of wave front sets, we refer to \cite{H} and \cite{LS}. By $\mathcal D'(\Omega)$, where $\Omega\subseteq\mathbb R^d$ is open, is denoted the Schwartz space of distributions, which is the strong dual of the space of compactly supported smooth functions $\mathcal D(\Omega)$. The space $\mathcal E'(\Omega)$ is a subspace of $\mathcal D'(\Omega)$, consisting of compactly supported distributions. We also recall that $\mathcal S'=\mathcal S'(\R^d)$ is the space of tempered distributions, which is the strong dual of the space of rapidly decreasing functions $\mathcal S=\mathcal S(\R^d)$.

 Let $s\in\R$, $\T^d=[0,1)^d$, and $e_y(x):=\e^{-2\pi \sqrt{-1} \langle x,y\rangle}$, where $\langle x,y\rangle=\sum_{i=1}^dx_iy_i$, for $x,y\in\mathbb{R}^d$.
 We use the notation $\mu_s(\cdot)=(1+|\cdot|^2)^{s/2}$ and consider the Sobolev spaces (see \cite{A}) \[H^s=H^s(\R^d)=\left\{f\in\mathcal{S}^\prime : \int_{\R^d}|\widehat{f}(t)|^2\mu_{2s}(t)\D t<+\infty\right\},\]
 where $\widehat{f}$ denotes the Fourier transform of the integrable function $f$ defined by $\mathcal{F}f(t)=\widehat{f}(t)=\int_{\mathbb{R}^d}f(x)e_{t}(x)\D x$, $t\in\mathbb{R}^d$.
 Note, $\mathcal F^{-1}f(t)=\widehat f(-t)$.
 Sobolev spaces $H^s$ are Hilbert spaces with inner product $\langle f,g\rangle_{H^s}=\int_{\R^d}\widehat{f}(t)\overline{\widehat{g}(t)}\mu_{2s}(t)\D t.$
These spaces satisfy $H^s\subseteq L^2\subseteq H^{-s}$ for $s\geqslant 0$ (with $H^0=L^2$).
The weighted Lebesgue space $L^2_s=L^2_s(\R^d)=\left\{f: \int_{\R^d}|{f}(x)|^2\mu_{2s}(x)\D x<+\infty\right\}$ is also a Hilbert space with the corresponding inner product. Subsequently, ${\ell_s^p=\ell_s^p(\mathbb{Z}^d)=\big\{(c_k)_{k\in\mathbb{Z}^d}\,:\,\sum_{k\in\mathbb{Z}^d}|c_k|^p\mu_{ps}(k)<+\infty\big\}}$, $p\geqslant1$,
and ${\ell_s^\infty=\ell_s^\infty(\mathbb{Z}^d)=\big\{(c_k)_{k\in\mathbb{Z}^d}\,:\,\sup_{k\in\mathbb{Z}^d}|c_k|\mu_{s}(k)<+\infty\big\}}$.
The spaces $\ell_s^2$ with the inner product $\langle (c_k)_{k\in\mathbb{Z}^d},(d_k)_{k\in\mathbb{Z}^d}\rangle_{\ell_s^2}=\sum_{k\in\mathbb{Z}^d}c_k\overline{d_k}\mu_{2s}(k)$ are Hilbert spaces.

The convolution of two functions is defined as $f*g(x)=\int_{\mathbb R^d}f(t)g(x-t)\D t$, $x\in\R^d$, and for distributions it is given by $f*g(x)=( f(x)g(t),\phi(x+t))$, $x,t\in\R^d$. Both definitions hold in their respective domains, provided that the convolution exists in those domains.
If $f\in\mathcal D'(\mathbb R^d)$ and $(x_0,\xi_0)\in\Omega\times(\mathbb R^d\setminus \{0\})$, then this point is said to be microlocally regular for $f$ if there exists $\theta\in\mathcal D(U_{x_0})$, which is non-zero at $x=x_0$, where $U_{x_0}$ is an open neighborhood of $x_0$, and a cone neighborhood $\Gamma_{\xi_0}$ of $\xi_0$ such that for every $N>0$ there exists $C_N>0$ satisfying $|\mathcal F(f\theta)(\xi)|\leqslant \frac{C_N}{{\mu_{N}(\xi)}}$, $\xi\in\Gamma_{\xi_0}$.
The set of all microlocally regular points of $f$ forms an open set and its complement in $\Omega\times(\mathbb R^d\setminus \{0\})$ is called the wave front set, denoted by $WF f.$
Recall, if $f\in\mathcal D'(\Omega)$ and $g\in \mathcal E'(\Omega),$ then
\begin{equation}\label{star}
WF (f*g)\subseteq\big\{(x+y,\xi)\,:\, (x,\xi)\in WF f, (y,\xi)\in WF g\big\}.
\end{equation}
The product of $f$ and $g$ in $\mathcal D'(\Omega)$ is a local property. It exists in a neighborhood $U_{x_0}$ of $x_0$ if there exists a compactly supported function $\theta\in\mathcal D(U_{x_0})$ (non-zero at $x_0$) such that $\xi_0\in\mathbb R^d$ does not satisfy both $(x_0,\xi_0)\in WF f$ and $(x_0,-\xi_0)\in WF g$. In this case,
\begin{equation}\label{2star}WF (fg)\subseteq\big\{(x,\xi+\eta):  (x,\xi)\in WF f \mbox{ or } \xi=0\mbox{ and }  (x,\eta)\in WFg
 \mbox{ or }\eta=0\big\}.
\end{equation}
   \subsection{Periodic test function spaces}

 The space of periodic test functions $\mathscr{P}=\mathscr{P}(\R^d)$ consists of smooth periodic functions of the form $f=\sum_{k\in\Z^d}f_ke_k$, where the coefficients $f_k$ satisfy $\sum_{k\in\Z^d}|f_k|^2 \mu_{2n}(k)<+\infty$ for every $n\in\N_0=\mathbb{N}\cup\{0\}$
 ($f_k=\int_{\T^d}f(x)e_{-k}(x)\D x$, $k\in\Z^d$); $\T^d=[0,1)^d$ is the torus. The topology on $\mathscr{P}$ is induced by the sequence of norms $\|f\|_n=\sup_{x\in\T^d, |\alpha|\leqslant n}|f^{(\alpha)}(x)|$, $n\in\mathbb{N}_0$.
 The dual space of $\mathscr P$, known as the space of periodic distributions, is denoted by $\mathscr{P}^\prime$. The following characterization holds:
 $f=\sum_{k\in\Z^d}{f_k e_k} \in \mathscr{P}^\prime$ if and only if $\sum_{k\in\Z^d}|f_k|^2\mu_{-2n_0}(k)<+\infty$ for some $n_0>0$. % We use notation $\mathscr P'^{n_0}$ when this holds.
  If $f=\sum_{k\in\Z^d}{f_k e_k}\in\mathscr{P}^\prime$ and $g=\sum_{k\in\Z^d}{g_k e_k}\in\mathscr{P}$,
 then their dual pairing is given by $\left(f,g\right)_{\mathscr{P}^\prime,\mathscr{P}}=\sum_{k\in\Z^d}f_k g_k$.

 The equivalent definitions of periodic test functions can be formulated using weighted $\ell^{p}_{r}$ norms, as follows. We denote by $\mathscr P^{p,r}$, where $p\in[1,+\infty]$ and $r\in\R$, the space of elements $h\in\mathscr{P}^\prime$ such that $h=\sum_{k\in\mathbb Z^d}a_ke_{k}$, where $(a_k)_{k\in\Z^d}\in\ell^{p}_r$. Note that $\bigcap_{r\geqslant0}\mathscr P^{p,r}=\mathscr P$ and $\bigcup_{r\leqslant0}\mathscr P^{p,r}=\mathscr P^\prime$, for every $p\in[1,+\infty]$. Recall that if $f_1=\sum_{k\in\Z^d}f_k^1e_k\in\mathscr P^{p_1,r}$ and $f_2=\sum_{k\in\Z^d}f_k^2e_k\in\mathscr P^{p_2,r}$, then their product is defined as $f=f_1f_2=\sum_{k\in\Z^d}f_ke_k\in\mathscr P^{p,r}$,
  where $\frac{1}{p_1}+\frac{1}{p_2}=\frac{1}{p}+1$ and $f_k=\sum_{j\in\Z^d}f_{k-j}^1f_j^2$, $k\in\Z^d$. Moreover, the mapping $f_1f_2:\mathscr P^{p_1,r}\times\mathscr P^{p_2,r}\to\mathscr P^{p,r}$ is continuous. Furthermore, if $f_1\in\mathscr P^{p_1,r_1}$ and $f_2\in\mathscr P^{p_2,r_2}$, where $r_1+r_2\geqslant0$, then the mapping $f_1f_2:\mathscr P^{p_1,r_1}\times\mathscr P^{p_2,r_2}\to\mathscr P^{p,r}$,
  where $\frac{1}{p_1}+\frac{1}{p_2}=\frac{1}{p}+1$ and $r\leqslant\min\{r_1,r_2\}$, is continuous (see \cite{mp}).

\section{FGSI spaces}
  Let $s\in\mathbb R$ and $\phi_i\in H^s$, $i=1,\ldots,m$. Following the approach of \cite{BVR1}-\cite{BR}, we  define the FGSI space $V_s(\phi_1,\ldots,\phi_m)$ in \cite{aap1} (see also \cite{aap2} and \cite{aap3}) as
   $$V_s(\phi_1,\ldots,\phi_m)=\overline{\spn}\{\phi_i(\cdot-k):k\in\mathbb{Z}^d, i=1,\ldots,m\}.$$ The spaces $V_s(\phi)$ are called principal shift-invariant (PSI) spaces.

Let $$\phi_i\in L^2_s\cap \mathcal L^\infty, i=1,\ldots,m, \mbox{ where } \mathcal L^\infty=\bigg\{f\in L^\infty: \sup_{x\in\T^d}\sum_{j\in\Z^d}|f(x+j)|<+\infty\bigg\}.$$  We denote by  $\mathcal V_s(\phi_1,\ldots,\phi_m)$, $s\geqslant 0,$ the space of functions of the form $$f=\sum_{i=1}^m\sum_{k\in\mathbb{Z}^d}b^i_k\phi_i(\cdot-k), \;\mbox{where}\; (b^i_k)_{k\in\mathbb{Z}^d}\in\ell^2_s,\, i=1,\ldots,m.$$ It holds that
  $\mathcal V_s(\phi_1,\ldots,\phi_m)\subseteq\spn\{\phi_i(\cdot-k): k\in\mathbb{Z}^d, i=1,\ldots,m\}\subseteq V_s(\phi_1,\ldots,\phi_m)$,
  where the middle space is densely embedded into the one on the right.
  By \cite{aap1}, for $\phi_i\in H^s\cap L^2_s\cap\mathcal L^\infty$, $i=1,\ldots,m$, we have $V_s(\phi_1,\ldots,\phi_m)=\mathcal V_s(\phi_1,\ldots,\phi_m)$
  if both $\mathcal V_s(\phi_1,\ldots,\phi_m)$ and $\mathcal{F}\big(\mathcal V_s(\phi_1,\ldots,\phi_m)\big)$ are closed in $L_s^2$. We refer to the following condition as (A):
  \begin{align*}(A)\quad & s\geqslant 0,\; \phi_i \in  H^s\cap L^2_s\cap \mathcal L^\infty,
  i=1\ldots,m, \,\mbox{ and }\;\\
  &\mbox{ both }\mathcal V_s(\phi_1,\ldots,\phi_m)\mbox{ and }\mathcal{F}\big(\mathcal V_s(\phi_1,\ldots,\phi_m)\big) \mbox{ are closed in }L_s^2.
  \end{align*}
  Recall \cite{aap1} (see also \cite{AST} for case $s=0$) that the space $\mathcal V_s(\phi_1,\ldots,\phi_m)$ is closed if and only if the set $\{\phi_i(\cdot-k) : k \in \mathbb{Z}^d,\ i=1,\ldots,m\}$ is a frame for $\mathcal V_s(\phi_1,\ldots,\phi_m)$. Moreover, this condition is equivalent to the fact that the same system is also a frame for $V_s(\phi_1,\ldots,\phi_m)$, provided that $\phi_i \in H^s \cap L_s^2 \cap \mathcal{L}^\infty$ for all $i = 1,\ldots,m$. In this case, if $f=\sum_{i=1}^m\sum_{k\in\mathbb{Z}^d}a^i_k\phi_i(\cdot-k)\in V_s(\phi_1,\ldots,\phi_m)$ and $\mathcal{F}(\mathcal V_s(\phi_1,\ldots,\phi_m))$ is closed, then the coefficient sequences $(a_k^i)_{k \in \mathbb{Z}^d}$ belong to $\ell_s^2$ for each $i = 1,\ldots,m$ (see \cite{aap1}). Therefore, under condition (A), we have the equality
\[V_s(\phi_1,\ldots,\phi_m)=\mathcal V_s(\phi_1,\ldots,\phi_m).\]
Additionally, if $(A)$ holds, then following \cite{BVR1}, $\{\phi_1,\ldots,\phi_m\}$ is called a basis for $V_s(\phi_1,\ldots,\phi_m)$ if, in the representation of $f$,
  \begin{equation}\label{re111}\widehat f=\sum_{i=1}^m\tau_i\widehat \phi_i,
  \end{equation}
  the periodic functions $\tau_i$ (with coefficients in $\ell^2_s$) are uniquely determined and defined on the torus $\mathbb{T}^d$.
Next, $\{\phi_1,\ldots,\phi_m\}$ is called a semi-basis for $V_s(\phi_1,\ldots,\phi_m)$ if in the expansion (\ref{re111})
 the periodic functions
$\tau_i$, $i=1,\ldots,m,$ are unique in the set $B$ of positive measure, which is a subset of the torus $\mathbb T^d$. According to \cite{BVR1}, $B$ equals the spectrum
$\sigma=\{x\in \mathbb T^d:\dim J_{V_s(\phi_1,\ldots,\phi_m)}(x)>0\}$ of $V_s(\phi_1,\ldots,\phi_m)$, where $J_{V_s(\phi_1,\ldots,\phi_m)}$ is the
range function (see \cite{MB} for $s=0$ and \cite{aap1}). Recall that $\sigma=\mathbb T^d$
for the basis. The analysis of such spaces for $s=0$ is provided in \cite{BVR1} and \cite{MB}.

\subsection{Duals for FGSI spaces}
Let $s\geqslant 0.$ We introduce
\begin{equation}\label{starr}\mathcal V_{-s}(f_1,\ldots,f_m)=
\bigg\{F=\sum_{j=1}^m\sum_{k\in\Z^d}
 c^j_kf_j(\cdot-k): (c^j_k)_{k\in\Z^d}\in \ell^2_{-s},
 f_j\in H^{-s}, j=1,\ldots,m\bigg\}.\end{equation}
 This is an ad hoc definition. It is not related to the definition of spaces $V_s$ for $s<0$. This space is used only for the analysis of the duality in this section.
 We say that $\{f_1,\ldots,f_m\}$ is a basis for $\mathcal V_{-s}(f_1,\ldots,f_m)$ if in the representation
 \begin{equation} \label{bsb}\widehat F=\sum_{j=1}^m\tau^j\widehat f_j,
\end{equation}
 periodic distributions $\tau^j(t)=\sum_{k\in\Z^d}c_k^je_{k}(t)$, $j=1,\ldots,m,$
 are uniquely determined for all $t\in\mathbb T^d$.
\begin{proposition}  $(a)$ Let $\{\phi_1,\ldots,\phi_m\}$ satisfy condition $(A)$ and let them be a basis for $V_s(\phi_1,\ldots,\phi_m)$. Assume that $\mathcal B$ is the set of all
elements  $(f_1,\ldots,f_m)\in(H^{-s})^m$ such that
\begin{equation}\label{usl11}\{f_1,\ldots,f_m\} \; \mbox{ is a basis for }\; \mathcal V_{-s}(f_1,\ldots,f_m).
\end{equation}
Then,
\begin{equation} \label{main}
(V_s(\phi_1,\ldots,\phi_m))'=
 \bigoplus_{(f_1,\ldots,f_m)\in \mathcal B}\mathcal V_{-s}(f_1,\ldots,f_m)
 \end{equation}
$(\bigoplus$ denotes the direct sum$)$.

$(b)$ Let $\{\phi_1,\ldots,\phi_m\}$ satisfy condition $(A)$ and let them be a semi-basis for $V_s(\phi_1,\ldots,\phi_m)$. Assume that $\mathcal B$ is the set of all
elements $(f_1,\ldots,f_m)\in(H^{-s})^m$ such that
for $F\in\mathcal V_{-s}(f_1,\ldots,f_m)$, in the representation \eqref{bsb}, the periodic distributions $\tau^j$, $j=1,\ldots,m,$ are uniquely determined on a set $B\subset \mathbb T^d$ of positive measure, the same as on which the periodic functions $\tau_i$, $i=1,\ldots,m,$ in \eqref{re111}, are uniquely determined. 
Then,
\begin{equation} \label{main22}
(V_s(\phi_1,\ldots,\phi_m))'=
 \bigoplus_{(f_1,\ldots,f_m)\in \mathcal B}\mathcal V_{-s}(f_1,\ldots,f_m).
\end{equation}
\end{proposition}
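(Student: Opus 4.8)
The plan is to realise $(V_s(\phi_1,\ldots,\phi_m))'$ through the canonical duality between $H^s$ and $H^{-s}$ and then, inside each class of functionals, to pin down a representative lying in one of the spaces $\mathcal V_{-s}(f_1,\ldots,f_m)$. I would use the bilinear pairing $\langle f,F\rangle=\int_{\R^d}\widehat f(t)\,\widehat F(-t)\,\D t$, which extends $\int fF\,\D x$ and is consistent with the periodic pairing $(\cdot,\cdot)_{\mathscr P',\mathscr P}$. Writing $\widehat f\,\widehat F(-\cdot)=(\widehat f\,\mu_s)(\widehat F(-\cdot)\mu_{-s})$ and using $\mu_s\mu_{-s}\equiv1$, Cauchy--Schwarz gives $|\langle f,F\rangle|\le\|f\|_{H^s}\|F\|_{H^{-s}}$, so the pairing identifies $H^{-s}$ with $(H^s)'$. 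Restricting functionals to the closed subspace $V_s$ yields a surjection $H^{-s}\to(V_s)'$ with kernel the annihilator $V_s^{\circ}=\{F\in H^{-s}:\langle f,F\rangle=0\text{ for all }f\in V_s\}$, so $(V_s)'\cong H^{-s}/V_s^{\circ}$. In particular every $\mathcal V_{-s}(f_1,\ldots,f_m)\subseteq H^{-s}$ maps into $(V_s)'$, so the inclusion $\supseteq$ in \eqref{main} is immediate; the content is the converse together with the meaning of the direct sum.

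For the converse I would represent an arbitrary $L\in(V_s)'$ by the canonical dual generators. Put $\beta^i_k=L(\phi_i(\cdot-k))$. Since the synthesis map $(a^i_k)\mapsto\sum_{i,k}a^i_k\phi_i(\cdot-k)$ is bounded from $(\ell^2_s)^m$ onto $V_s=\mathcal V_s$ (the frame/basis property under $(A)$), the composition $(a^i_k)\mapsto L(f)=\sum_{i,k}a^i_k\beta^i_k$ is a bounded functional on $(\ell^2_s)^m$, whence $(\beta^i_k)_k\in\ell^2_{-s}$ for each $i$. Because $\{\phi_1,\ldots,\phi_m\}$ is a basis, the range function satisfies $\dim J_{V_s(\phi_1,\ldots,\phi_m)}(t)=m$ for a.e.\ $t\in\T^d$, the fibre Gramian is boundedly invertible, and inverting it fibrewise produces dual generators $\tilde\phi_1,\ldots,\tilde\phi_m\in H^{-s}$ with $\langle\phi_i(\cdot-k),\tilde\phi_j(\cdot-l)\rangle=\delta_{ij}\delta_{kl}$. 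Then $F:=\sum_{j,l}\beta^j_l\tilde\phi_j(\cdot-l)\in\mathcal V_{-s}(\tilde\phi_1,\ldots,\tilde\phi_m)$ satisfies $\langle\phi_i(\cdot-k),F\rangle=\beta^i_k=L(\phi_i(\cdot-k))$, hence represents $L$; biorthogonality forces the coefficients in \eqref{bsb} to be unique, so $(\tilde\phi_1,\ldots,\tilde\phi_m)\in\mathcal B$, and $L$ lies in the corresponding summand.

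To organise the remaining tuples and to verify the basis condition \eqref{usl11} I would expand the pairing in coefficients: $\langle\phi_i(\cdot-k),f_j(\cdot-l)\rangle$ depends only on $k-l$ and is the $(k-l)$-th Fourier coefficient of the $\Z^d$-periodic bracket obtained by periodising $\widehat\phi_i(t)\widehat f_j(-t)$, so $\langle f,F\rangle=\sum_{i,j}\int_{\T^d}\tau_i(t)\,\tau^j(-t)\,[\widehat\phi_i,\widehat f_j](t)\,\D t$ with $\tau_i,\tau^j$ as in \eqref{re111} and \eqref{bsb}. From this, every tuple representing $L$ differs from $(\tilde\phi_1,\ldots,\tilde\phi_m)$ by generators lying in $V_s^{\circ}$, which is exactly the freedom recorded by $\bigoplus$. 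Part $(b)$ runs in parallel with ``basis'' replaced by ``semi-basis'': now $\dim J_{V_s(\phi_1,\ldots,\phi_m)}(t)<m$ on a set of positive measure, so I would biorthogonalise only over the spectrum $\sigma=B$ and extend the dual generators by zero on $\T^d\setminus\sigma$; the compatibility clause in the statement, that the $\tau^j$ be determined on the same $B$ as the $\tau_i$, is precisely what makes this fibrewise pairing well posed and yields \eqref{main22}.

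I expect the main obstacle to be the precise meaning of the symbol $\bigoplus$. The canonical dual already maps isomorphically onto $(V_s)'$, and it is itself one of the tuples in $\mathcal B$, so the various $\mathcal V_{-s}(f_1,\ldots,f_m)$ overlap heavily once identified with their images in $H^{-s}/V_s^{\circ}$; the identity in \eqref{main} is therefore to be read as ``$(V_s)'$ is the union of these embedded copies, with each functional having a representative unique modulo $V_s^{\circ}$,'' and turning this into a clean categorical statement --- rather than a genuine internal direct sum --- is the delicate point. The secondary difficulties are the measurable fibrewise construction of the $\tilde\phi_j$ as honest elements of $H^{-s}$ with $\ell^2_{-s}$ coefficients, and, in part $(b)$, the bookkeeping over the degenerate fibres on $\T^d\setminus\sigma$.
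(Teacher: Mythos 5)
Your proposal is correct and its core coincides with the paper's: under condition $(A)$ and the basis hypothesis, both sides are identified coefficientwise with $(\ell^2_{s})^m$ and $(\ell^2_{-s})^m$ respectively, and the result is read off from the duality of these sequence spaces. The paper's own proof is considerably terser: it writes out the pairing of $F=\sum_j\sum_k c^j_k f_j(\cdot-k)$ with $\theta=\sum_i\sum_k a^i_k\phi_i(\cdot-k)$ in the Fourier domain, checks it is finite, and then simply declares the two identifications $\mathcal V_{-s}(f_1,\ldots,f_m)\leftrightarrow(\ell^2_{-s})^m$ and $V_s(\phi_1,\ldots,\phi_m)\leftrightarrow(\ell^2_s)^m$, concluding from $((\ell^2_s)^m)'=(\ell^2_{-s})^m$. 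What you add --- the boundedness of the synthesis operator forcing $(\beta^i_k)_k\in\ell^2_{-s}$, and the fibrewise Gramian inversion producing biorthogonal dual generators $\tilde\phi_j\in H^{-s}$ so that every $L\in(V_s)'$ is actually realized by some tuple in $\mathcal B$ --- is the surjectivity argument that the paper leaves implicit; it is a genuine strengthening rather than a detour. Your closing observation is also well taken: since a single tuple in $\mathcal B$ (e.g.\ the canonical dual) already represents all of $(V_s)'$, the symbol $\bigoplus$ in \eqref{main} cannot be an internal direct sum of subspaces of $H^{-s}/V_s^{\circ}$; the paper's proof does not resolve this either, so flagging it as the delicate point is appropriate rather than a gap in your argument.
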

\begin{proof}
$(a)$ Let
 $ \mathcal V_{-s}(f_1,\ldots,f_m)$ be of the form (\ref{starr}),
where $(f_1,\ldots,f_m)\in \mathcal B.$
 The dual pairing  of $F\in H^{-s}$ and $\theta\in H^s$ is given by $(F,\theta)=F(\overline\theta)=\langle F,\overline \theta\rangle.$ Thus, if $F$ is as above
and $\theta=\sum_{i=1}^m\sum_{k\in \Z^d}a^i_k\phi_i(\cdot-k)$, then the following holds:
\begin{equation}\label{for1}
\big(F(x),\theta(x)\big)=\sum_{i=1}^m\sum_{j=1}^m\sum_{n\in\Z^d}
\sum_{p+k=n\atop p,k\in\Z^d}\overline a^i_{-k}c^j_p
\int_{\R^d}e_{n}(t)
\frac{\widehat{f_j}(t)}{\mu_s(t)}\overline{\widehat \phi_i}(-t)\mu_s(t)\D t.
\end{equation}
Recalling that $\big(f(x),\phi(x)\big)=\big(\widehat f(t),\widehat\phi(-t)\big)$, we express the dual pairing as
$$\big(f(x-p),\phi(x-k)\big)=\langle e_{p+k}(t)\widehat f(t),\overline{\widehat \phi}(-t)\rangle,\quad p,k\in\Z^d.
$$
The left-hand side of (\ref{for1}) is finite since it is bounded above by
$$ C\sum_{i=1}^m\sum_{j=1}^m\sum_{n\in\Z^d}\sum_{p+k=n\atop p,k\in\Z^d}a_{-k}^ic^j_p<C_1.
$$
 Using (\ref{usl11}), we identify
 $\mathcal V_{-s}(f_1,\ldots,f_m)$ with $(\ell^2_{-s})^m$ through
 $$V_{-s}(f_1,\ldots,f_m)\ni F=\sum_{j=1}^m\tau^j\widehat f_j\leftrightarrow (\tau^1,\ldots,\tau^m)\leftrightarrow ((c_p^1)_{p\in\Z^d},\ldots,(c_p^m)_{p\in\Z^d})\in(\ell^2_{-s})^m.
 $$
Similarly, we identify
 $V_{s}(\phi_1,\ldots,\phi_m)$ with $(\ell^2_{s})^m$ via
 $$V_s(\phi_1,\ldots,\phi_m)\ni \theta=\sum_{i=1}^m\tau_{i}\widehat \phi_i \leftrightarrow (\tau_1,\ldots,\tau_m)\leftrightarrow ((a_k^1)_{k\in\Z^d},\ldots,(a_k^m)_{k\in\Z^d})\in(\ell^2_{s})^m.
 $$
 Since $(\ell^2_{-s})^m$ is the dual space of $(\ell^2_{s})^m,$ we obtain (\ref{main}).

  $(b)$ The proof is the same as for $(a)$.
  % while for c) we use the fact that the dual of the
\end{proof}
\begin{remark}
Since every FGSI space is a finite orthogonal sum of FGSI spaces with semi-basis $($see \cite{BVR1}$)$, and the dual of such a space is the direct product of the corresponding duals for each space in the orthogonal sum,  it follows that the dual of any FGSI space can be determined as the product of corresponding orthogonal sums.
\end{remark}

   \section{Relations between FGSI spaces}

By \cite{BVR1}, we know that if $\phi,\psi\in L^2$ is such that $\psi\in V(\phi)$ and $\supp\widehat{\phi}\subseteq\supp \widehat{\psi},$ then $V(\psi)=V(\phi)$ and $\supp \widehat \phi=\supp\widehat \psi$.
We now give a more general proposition.

For this proposition, we need some preparation. Let $A$ be an
$m\times m$ matrix with elements $\tau_i^j$, $i,j\in\{1,\ldots,m\},$ and suppose that these elements are in $\mathscr P^{2,s}$. The matrix $A$ is said to be regular if there exists a matrix $B$ with elements $\eta_i^j$ such that $AB=BA=I$,  where $I$ is a diagonal matrix with elements $\theta _i^i\in \mathscr P^{\infty,s}$, $i=1,\ldots,m$.

  We denote the transposition by $\cdot^T$; for example, if $(\widehat \psi_1,\ldots,\widehat \psi_m)$ is a row vector, then $(\widehat \psi_1,\ldots,\widehat \psi_m)^T$ denotes the corresponding column vector.

\begin{proposition}  Let $\{\phi_1,\ldots,\phi_m\}$ satisfy condition $(A)$ and let them be a basis of $V_s(\phi_1,\ldots,\phi_m)$. Let $\psi_j\in V_s(\phi_1,\ldots,\phi_m)$, $j=1,\ldots,m$,
such that the matrix A satisfies
$$(\widehat \phi_1,\ldots,\widehat \phi_m)^T=A(\widehat \psi_1,\ldots,\widehat \psi_m)^T,$$
where the elements of the matrix $A$ are periodic functions
$$\tau_i^j=\sum_{k\in\Z^d}a^j_{i,k}e_{k}, \; i, j \in\{1,\ldots,m\},\mbox{ with }\; (a_{i,k}^j)_{k\in\Z^d}\in\ell^2_s,\;  i,j\in\{1,\ldots,m\}.$$
 Then, the following conditions $(a)$ and $(b)$ are equivalent:
 \begin{itemize}\item[$(a)$] $V_s(\phi_1,\ldots,\phi_m)=V_s(\psi_1,\ldots,\psi_m);$
 \item[$(b)$] $A$ is a regular matrix.
 \end{itemize}
 In both cases it follows that $\bigcup_{i=1}^m\supp \widehat \phi_i=\bigcup_{j=1}^m\supp \widehat \psi_j.$
Moreover, if the following conditions hold:
  \begin{itemize}
 \item[$(i)$] $\bigcup_{i=1}^m\supp \widehat \phi_i=\bigcup_{j=1}^m\supp \widehat \psi_j$, and
\item[$(ii)$]  $\supp \widehat \phi_{j}\setminus \bigcup_{i=1, i\neq j}^m
 \supp \widehat \phi_i \neq \emptyset,\;  \;j=1,\ldots,m,
$
\end{itemize}
 then the combination of $(i)$ and $(ii)$ is equivalent to $(a)$ and $(b)$.
 \end{proposition}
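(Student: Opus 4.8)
The natural framework here is fiberization through the range function, so the plan is to translate everything to the fibers and then to matrix algebra over the periodic scale $\mathscr P^{p,s}$. For a.e.\ $x\in\T^d$ I would pass to the fibers $\Phi_i(x)=(\widehat\phi_i(x+k))_{k\in\Z^d}$ and $\Psi_j(x)=(\widehat\psi_j(x+k))_{k\in\Z^d}$ in the weighted fiber space, and use the standard dictionary (see \cite{aap1}, and \cite{MB},\cite{BVR1} for $s=0$) by which $V_s(\phi_1,\ldots,\phi_m)=V_s(\psi_1,\ldots,\psi_m)$ if and only if the range functions agree, $J_\phi(x)=J_\psi(x)$ a.e., where $J_\phi(x)=\spn\{\Phi_1(x),\ldots,\Phi_m(x)\}$. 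Since $\{\phi_i\}$ is a basis, the fibers $\Phi_1(x),\ldots,\Phi_m(x)$ are linearly independent for a.e.\ $x$, so $\dim J_\phi(x)=m$ on the spectrum. Because the entries $\tau_i^j$ of $A$ are $1$-periodic, the given identity fiberizes to the scalar matrix identity \[\Phi(x)=A(x)\Psi(x),\qquad A(x)=(\tau_i^j(x))\in\mathbb C^{m\times m},\] for a.e.\ $x$, and equivalently $\widehat\phi(y)=A(y)\widehat\psi(y)$ pointwise a.e.\ on $\R^d$.

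For $(b)\Rightarrow(a)$ I would argue that if $A$ is regular with inverse $B=(\eta_i^j)$, then $B(x)A(x)$ equals the diagonal matrix $I(x)$ a.e., whose entries act as the identity on the supports, so that $\Psi(x)=B(x)\Phi(x)$; combined with the inclusion $J_\psi(x)\subseteq J_\phi(x)$ that follows directly from $\psi_j\in V_s(\phi_1,\ldots,\phi_m)$, this yields $J_\phi=J_\psi$ a.e.\ and hence $(a)$. The key structural input is the product rule recalled in Section~2: with $\frac1{p_1}+\frac1{p_2}=\frac1p+1$ and $p_1=p_2=2$ one gets $p=\infty$, so products of $\mathscr P^{2,s}$-entries land in $\mathscr P^{\infty,s}$, which is exactly what makes the matrix identities $AB=BA=I$ meaningful in the periodic scale. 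For $(a)\Rightarrow(b)$ I would use that $\psi_j\in V_s(\phi_1,\ldots,\phi_m)=\mathcal V_s(\phi_1,\ldots,\phi_m)$ under $(A)$, so each $\widehat\psi_j=\sum_i\sigma_j^i\widehat\phi_i$ with $\sigma_j^i\in\mathscr P^{2,s}$; collecting these into $C=(\sigma_j^i)$, substituting into $\widehat\phi=A\widehat\psi$, and invoking uniqueness of the basis representation of each $\widehat\phi_i$ forces a two-sided identity in the periodic matrix algebra. Since the matrix is square, this exhibits $A$ as regular with $B=C$, whose entries lie in $\mathscr P^{2,s}\subseteq\mathscr P^{\infty,s}$.

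The support identity I would read off from $\widehat\phi(y)=A(y)\widehat\psi(y)$ once $A(y)$ is invertible a.e.\ (equivalently, $A$ regular): then $\widehat\phi(y)=0$ if and only if $\widehat\psi(y)=0$ for a.e.\ $y$, whence $\bigcup_i\supp\widehat\phi_i=\bigcup_j\supp\widehat\psi_j$ after taking closures. For the final equivalence, $(a)$ and $(b)$ already give $(i)$, so the content is the converse under $(ii)$. Here I would exploit the private frequency set $E_j=\supp\widehat\phi_j\setminus\bigcup_{i\neq j}\supp\widehat\phi_i$: on $E_j$ only $\widehat\phi_j$ survives, so $\widehat\psi=C\widehat\phi$ shows every $\widehat\psi_l$ is a multiple of $\widehat\phi_j$ there, while $(i)$ guarantees that some $\widehat\psi_l$ is nonzero on $E_j$; feeding this back through $\widehat\phi=A\widehat\psi$ pins the $j$-th column of the product matrix to $e_j$ on $E_j$, i.e.\ forces the relevant minors to be nondegenerate. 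Propagating this over $j=1,\ldots,m$ I would assemble pointwise invertibility of $A(y)$ a.e., hence regularity, which delivers $(b)$ and then $(a)$.

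I expect the genuine difficulty to lie not in the linear algebra but in the functional-analytic bookkeeping: upgrading pointwise a.e.\ invertibility of $A(y)$ to regularity in the strong sense (inverse entries genuinely in $\mathscr P^{\infty,s}$) and ensuring that the resulting expansions converge in $H^s$ with coefficients in $\ell^2_s$, so that the fiber-level equalities faithfully reflect equalities of the closed $H^s$-spaces rather than merely formal Fourier identities. This is precisely where condition $(A)$ and the frame property are indispensable, and it is also where the $(ii)$-based argument must be handled carefully, using $E_j$ on a set of positive measure rather than as a mere nonempty set, so that the measure-theoretic conclusions about the range functions go through.
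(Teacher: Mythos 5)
Your treatment of the main equivalence $(a)\Leftrightarrow(b)$ follows the same skeleton as the paper's: from $\psi_j\in V_s(\phi_1,\ldots,\phi_m)$ you extract a companion matrix ($C$ in your notation, $B$ in the paper's) with periodic entries, form the product relation $(\widehat\phi_1,\ldots,\widehat\phi_m)^T=AC\,(\widehat\phi_1,\ldots,\widehat\phi_m)^T$, and invoke uniqueness of the basis representation to identify $AC$ with $I$; the fiberization/range-function dressing is cosmetic here, and your remark that $\mathscr P^{2,s}\times\mathscr P^{2,s}\to\mathscr P^{\infty,s}$ is what makes the matrix identities meaningful is a useful point the paper leaves implicit. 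The support equality is also handled in essentially the same way (you read it off the invertible matrix relation; the paper argues by contradiction on an open set where one transform is nonzero and all the others vanish).

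The genuine gap is in the final implication $(i)\wedge(ii)\Rightarrow(b)$. The paper's route is to suppose $AB\neq I$, extract from $(\widehat\phi)^T=AB(\widehat\phi)^T$ a dependency relation $\widehat\phi_{j_0}=\sum_{i\neq j_0}\theta_i^{j_0}\widehat\phi_i$ (or $\widehat\phi_{j_0}=0$), and contradict $(ii)$ by evaluating on the private set $E_{j_0}=\supp\widehat\phi_{j_0}\setminus\bigcup_{i\neq j_0}\supp\widehat\phi_i$, where the right-hand side vanishes but the left does not; at no point does it need invertibility of the fiber matrices away from the $E_j$. Your route instead tries to establish pointwise invertibility of $A(y)$ \emph{a.e.} by analysing $A$ and $C$ on $\bigcup_{j}E_j$ and then ``propagating''. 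This does not follow: the sets $E_j$ are only guaranteed nonempty (not conull, not even of positive measure a priori), so pinning the columns of $AC$ on $\bigcup_j E_j$ says nothing about $A(y)$ on the rest of the spectrum, and a.e.\ invertibility is exactly what you would need for your subsequent deduction of $(b)$. On top of that, you yourself flag---and do not close---the step of upgrading pointwise a.e.\ invertibility to regularity in the paper's sense (an inverse with entries in the periodic scale, $AB=BA=I$ with $I$ diagonal over $\mathscr P^{\infty,s}$). Both issues disappear if you argue as the paper does, by contradiction against $(ii)$ directly from the relation $AB\neq I$, rather than through pointwise invertibility of the fibers. (Your observation that $(ii)$ really ought to supply a set of positive measure for any measure-theoretic conclusion is fair, but it is a criticism of the hypothesis as stated, not a repair of your argument.)
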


\begin{proof} Since $\psi_j\in V_s(\phi_1,\ldots,\phi_m)$, $j=1,\ldots,m$, there exists an $m\times m$ matrix $B$ with elements $\nu_i^j\in \ell^2_s$, $i,j\in\{1,\ldots,m\},$ such that
$$
(\widehat \psi_1,\ldots,\widehat \psi_m)^T=B(\widehat \phi_1,\ldots,\widehat \phi_m)^T.
$$
Now, the equivalence of $(a)$ and $(b)$ follows from the  fact that
$$(\widehat \phi_1,\ldots,\widehat \phi_m)^T=AB(\widehat \phi_1,\ldots,\widehat \phi_m)^T,$$
and the fact that $AB=I$ in case $(a)$ equivalently, in case $(b)$.
In both cases, for every
$i=1,\ldots,m$, it holds that
$$\supp \widehat \phi_i\subseteq \bigcup_{j=1}^m\supp \widehat \psi_j,$$
because if, for some $i_0,$
$$\supp \widehat \phi_{i_0}\setminus \bigcup_{j=1}^m\supp \widehat \psi_j\neq \emptyset,$$
 there exists an open set where
 $\widehat \phi_{i_0}\neq 0$, but $\widehat \psi_j=0$ for all $j=1,\ldots,m$, which would imply $\phi_{i_0}\notin V_s(\psi_1,\ldots,\psi_m)$, a contradiction.
Similarly, we have
$$\supp \widehat \psi_j\subseteq \bigcup_{i=1}^m\supp \widehat \phi_i,$$
completing the first part of the statement.

Now, assume that $(i)$ and $(ii)$ hold, but $(b)$ does not hold.  From the first part, we know that
$$ (\widehat \phi_1,\ldots,\widehat \phi_m)^T=AB(\widehat \phi_1,\ldots,\widehat \phi_m)^T,
$$
 but since $A$ is not regular,  we have $AB\neq I.$ Denote by $\theta_i^j$, $i,j \in\{1,\ldots,m\}$, the elements of the matrix $AB.$
 Since $AB\neq I$, there must exist some $\theta_i^j\in\ell^{\infty}_s$ that are non-zero for $i\neq j$ or, if such elements do not exist, there must exist some $\theta_i^i$ equal zero.
In either case,  there is some $j_0\in\{1,\ldots,m\}$ such that
\begin{equation}\label{c11}
\widehat \phi_{j_0}=\sum_{i=1\atop i\neq j_0}^m\theta_i^{j_0}\widehat \phi_i\quad \mbox{ or } \quad \widehat \phi_{j_0}=0.
\end{equation}
However, this leads to a contradiction with condition $(ii)$. %, because there would be an open set where $\phi_{j_0}\neq 0$, but the right-hand side of either case in (\ref{c11}) would be zero.
 Thus, the assumption that $A$ is not regular leads to a contradiction. Therefore, $A$ must be regular. This completes the proof of the statement.
\end{proof}

%Recall that if $f\in L^2_s$, then $f\in L^1$ for $s>\frac{d}2$, and also $\ell^1\subset\ell_s^2$. Moreover, if $s\leqslant r$, then $\ell_r^p\subseteq\ell_s^p$ for every $p\in[1,+\infty]$.

In the following, we denote the Dirac delta distribution by $\delta(\cdot)$. Recall that:\begin{itemize}\item$\delta\in H^s$ only if $s<-\frac{d}2$;
\item if $s>\frac{d}2$, then $L^2_s\subset L^1$ and $\ell_s^2\subset\ell^1$;
\item if $s\leqslant r$, then $\ell_r^p\subseteq\ell_s^p$ for every $p\in[1,+\infty]$.
 \end{itemize}
 \begin{proposition}\label{t1} Let $s\geqslant0$, $\phi_i\in H^s $, $i=1,\ldots,m$, and let
 $$f=\sum_{k\in\mathbb Z^d}a_k\delta(\cdot-k),\; (a_{k})_{k\in\Z^d}\in\ell_{r}^{1}\;\mbox{ and }\;g=\sum_{i=1}^m\sum_{{k\in\Z^d}}b^i_k\phi_i(\cdot-k),\;(b_k^i)_{k\in\Z^d}\in\ell_{s}^{2}.$$
 If $s\leqslant r$, then $f*g\in V_s(\phi_1,\ldots,\phi_m)$.
 \end{proposition}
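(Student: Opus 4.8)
The plan is to use that $V_s(\phi_1,\ldots,\phi_m)$ is a \emph{closed} subspace of $H^s$ that is invariant under integer translations, and to realise $f*g$ as a norm-convergent superposition of integer shifts of the fixed element $g$. The starting point is the elementary identity $\delta(\cdot-k)*g=g(\cdot-k)$ for each $k\in\Z^d$, so that, at least formally,
\[
f*g=\Big(\sum_{k\in\Z^d}a_k\,\delta(\cdot-k)\Big)*g=\sum_{k\in\Z^d}a_k\,g(\cdot-k).
\]
Viewing $g$ as the given element of $V_s(\phi_1,\ldots,\phi_m)$ and using that this space is shift-invariant, we have $g(\cdot-k)\in V_s(\phi_1,\ldots,\phi_m)$ for every $k\in\Z^d$; hence each finite partial sum $h_M:=\sum_{|k|\le M}a_k\,g(\cdot-k)$ is a finite linear combination of elements of $V_s$ and therefore lies in $V_s(\phi_1,\ldots,\phi_m)$.

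Next I would prove that $(h_M)_M$ converges in $H^s$. The $H^s$-norm is translation invariant, so $\|g(\cdot-k)\|_{H^s}=\|g\|_{H^s}$, and for $M<M'$,
\[
\|h_{M'}-h_M\|_{H^s}\le\Big(\sum_{M<|k|\le M'}|a_k|\Big)\,\|g\|_{H^s}.
\]
Here the hypothesis $s\le r$ enters (together with $s\ge0$, whence $r\ge0$ and $\mu_r\ge1$): it yields $\ell_r^1\subseteq\ell^1$, so that $\sum_{k\in\Z^d}|a_k|<+\infty$ and the tail on the right tends to $0$. Thus $(h_M)_M$ is Cauchy, its limit exists in $H^s$, and because $V_s(\phi_1,\ldots,\phi_m)$ is closed, the limit — which is $f*g$ — belongs to $V_s(\phi_1,\ldots,\phi_m)$.

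To exhibit the FGSI representation explicitly, I would expand $g(\cdot-k)=\sum_{i=1}^m\sum_{l\in\Z^d}b_l^i\,\phi_i(\cdot-k-l)$ and reindex with $n=k+l$, obtaining
\[
f*g=\sum_{i=1}^m\sum_{n\in\Z^d}c_n^i\,\phi_i(\cdot-n),\qquad c_n^i=\sum_{k\in\Z^d}a_k\,b_{n-k}^i=(a*b^i)_n.
\]
That $(c_n^i)_n\in\ell_s^2$ follows from a weighted Young inequality: using the submultiplicativity $\mu_s(n)\le 2^{s/2}\mu_s(k)\,\mu_s(n-k)$, valid for $s\ge0$, one bounds $|c_n^i|\mu_s(n)$ by $2^{s/2}\big((|a_k|\mu_s(k))_k*(|b_l^i|\mu_s(l))_l\big)_n$ and invokes the unweighted estimate $\ell^1*\ell^2\subseteq\ell^2$, together with $\|a\|_{\ell_s^1}\le\|a\|_{\ell_r^1}<\infty$ since $\mu_s\le\mu_r$. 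This confirms that $f*g$ has coefficient sequences in $\ell_s^2$, consistent with its membership in the FGSI space.

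The step needing the most care is the identification of the abstractly defined convolution $f*g$ with the norm-convergent series $\sum_k a_k\,g(\cdot-k)$, i.e.\ justifying that the infinite sum of shifted Dirac masses may be convolved term-by-term with $g$ and the resulting series summed in $H^s$. Once the $\ell^1$-summability of $(a_k)_{k\in\Z^d}$ is established this interchange is routine — it rests only on the continuity of translation on $H^s$ and the closedness of $V_s$ — and, notably, it requires neither condition $(A)$ nor any frame hypothesis on the generators, but only that $g$ is a genuine element of $V_s(\phi_1,\ldots,\phi_m)$.
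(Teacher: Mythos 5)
Your argument is correct, but your primary route differs from the paper's. The paper reduces to $m=1$, passes to the Fourier side, multiplies the periodic series $\widehat f=\sum_k a_ke_k$ and $\sum_k b_ke_k$ to obtain $\widehat{f*g}=\widehat\phi\sum_k c_ke_k$ with $(c_k)=(a_k)*(b_k)\in\ell_s^2$, and inverts; the whole proof is the explicit exhibition of $f*g$ as $\sum_k c_k\phi(\cdot-k)$, with the weighted Young inequality $\ell_r^1*\ell_s^2\subseteq\ell_s^2$ left implicit. You instead argue on the physical side: $f*g=\sum_k a_k\,g(\cdot-k)$ converges absolutely in $H^s$ because $(a_k)\in\ell_r^1\subseteq\ell^1$ and the $H^s$-norm is translation invariant, each partial sum lies in the shift-invariant linear space $V_s(\phi_1,\ldots,\phi_m)$, and closedness finishes the job. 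This is more robust in one respect: it needs only that $g$ is an element of the closed span and that $(a_k)\in\ell^1$, and it never requires the series $\sum_n c_n^i\phi_i(\cdot-n)$ with $\ell_s^2$ coefficients to converge on its own (which, absent condition $(A)$ or $s>d/2$, is not automatic for a bare $\phi_i\in H^s$ — a point the paper glosses over). What the paper's computation buys, and what your supplementary reindexing recovers, is the explicit coefficient representation $c^i=a*b^i\in\ell_s^2$, i.e.\ membership in $\mathcal V_s$ rather than merely in the closed span; your Peetre-inequality justification of the weighted Young estimate makes precise exactly the step the paper asserts without proof. The two computations of the coefficients (Fourier-series multiplication versus reindexing the double sum) are of course the same discrete convolution in different notation.
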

 \begin{proof} It suffices to consider the case $m=1$.
 Let $f=\sum_{k\in\mathbb Z^d}a_k\delta(\cdot-k)$, $(a_{k})_{k\in\Z^d}\in\ell_{r}^{1}$, and $g=\sum_{k\in\mathbb Z^d}b_k\phi(\cdot -k)$, $(b_k)_{k\in\Z^d}\in\ell_s^2$. Since $s\leqslant r$, we get
 \begin{align*} \widehat{f*g}&=\widehat{f}\cdot\widehat{g}=\bigg(\sum_{k\in\Z^d}a_ke_{k}\bigg)\cdot\bigg(\sum_{k\in\Z^d}b_ke_{k}\widehat{\phi}\bigg)
 =\widehat{\phi}\bigg(\sum_{k\in\Z^d}a_ke_{k}\bigg)\cdot\bigg(\sum_{k\in\Z^d}b_ke_{k}\bigg)\\
 &=\widehat{\phi}\bigg(\sum_{k\in\Z^d}c_ke_{k}\bigg)=\sum_{k\in\Z^d}c_ke_{k}\widehat{\phi}=\Big({\sum_{k\in\Z^d}c_k\phi(\cdot-k)}\Big)^{\widehat{}},
  \end{align*}
  where $(c_k)_{k\in\Z^d}\in\ell_s^2$. Applying the inverse Fourier transform, we obtain $f*g(\cdot)=\sum_{k\in\Z^d}c_k\phi(\cdot-k)$. Thus, we conclude that $f*g\in V_s(\phi)$.
 \end{proof}
Denote by $H^s_c$ the space of compactly supported elements of $H^s$. 
 \begin{proposition} \label{11} Let $s>-\frac{d}{2}$. Assume that $\phi\in H_c^s$ and $\phi*\phi(\cdot)=\sum_{k\in\Z^d}a_k\phi(\cdot-k)$. Then, $\phi=0$.
 \end{proposition}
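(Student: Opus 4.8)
The plan is to pass to the Fourier side and exploit that $\phi$, being compactly supported, has an entire (hence real-analytic) Fourier transform. First I would take the Fourier transform of the hypothesis. Writing $m(t)=\sum_{k\in\Z^d}a_ke_k(t)$ for the $\Z^d$-periodic symbol attached to the coefficients $(a_k)$, the identity $\phi*\phi=\sum_{k\in\Z^d}a_k\phi(\cdot-k)$ becomes
$$\widehat\phi(t)^2=m(t)\,\widehat\phi(t),$$
so that $\widehat\phi\,(\widehat\phi-m)=0$ on the Fourier side.

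Next I would record the two structural facts that make this equation rigid. Since $\phi\in H^s_c\subset\mathcal E'$, the Paley--Wiener--Schwartz theorem guarantees that $\widehat\phi$ is the restriction to $\R^d$ of an entire function on $\mathbb C^d$; in particular $\widehat\phi$ is continuous and real-analytic on $\R^d$. Arguing by contradiction, assume $\phi\neq0$. Then $\widehat\phi\not\equiv0$ as an entire function, so its restriction to $\R^d$ is a nonzero real-analytic function and its zero set $Z=\{\widehat\phi=0\}$ has Lebesgue measure zero. On the full-measure open set $\R^d\setminus Z$, where $\widehat\phi^2$ is continuous and nonvanishing, the relation $\widehat\phi^2=m\widehat\phi$ pins $m$ down to the continuous function $\widehat\phi$; that is, $\widehat\phi=m$ there.

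The key step is to upgrade this to periodicity of $\widehat\phi$. Fix $k\in\Z^d$. For almost every $t$ with $t\notin Z$ and $t+k\notin Z$ --- a condition excluding only the null set $Z\cup(Z-k)$ --- the relation $\widehat\phi=m$ together with the periodicity $m(t+k)=m(t)$ gives $\widehat\phi(t+k)=m(t+k)=m(t)=\widehat\phi(t)$. Thus $\widehat\phi(\cdot+k)=\widehat\phi$ almost everywhere, and since both sides are continuous the equality holds for all $t$. Hence $\widehat\phi$ is $\Z^d$-periodic.

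Finally I would derive a contradiction from membership in $H^s$. A nonzero continuous periodic $\widehat\phi$ satisfies $c:=\int_{\T^d}|\widehat\phi|^2>0$, and comparing $|t+k|$ with $|k|$ on the fundamental cells (using $\mu_{2s}(t+k)\geqslant C_s\,\mu_{2s}(k)$ for large $|k|$, for either sign of $s$) yields
$$\int_{\R^d}|\widehat\phi(t)|^2\mu_{2s}(t)\D t\geqslant C\,c\sum_{k\in\Z^d}\mu_{2s}(k).$$
Since $s>-\tfrac d2$ we have $2s>-d$, so $\sum_{k\in\Z^d}\mu_{2s}(k)=\sum_{k}(1+|k|^2)^s=+\infty$, forcing $\|\phi\|_{H^s}^2=\infty$ and contradicting $\phi\in H^s$. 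Therefore $\phi=0$. The main obstacle I anticipate is the careful handling of $m$, which a priori is only a periodic distribution: the division by $\widehat\phi$ and the pointwise comparison must be justified on the open full-measure set $\{\widehat\phi\neq0\}$ as indicated above; the remaining ingredients (the measure-zero zero set of a nonzero real-analytic function, and the elementary divergence $\sum_k(1+|k|^2)^s=+\infty$ for $s>-\tfrac d2$, which is exactly the threshold making $\delta\in H^s$ fail) are routine.
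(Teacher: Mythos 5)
Your proof is correct, and its skeleton matches the paper's: pass to the Fourier side to get $\widehat\phi^{\,2}=m\widehat\phi$ with $m=\sum_{k\in\Z^d}a_ke_k$ periodic, conclude $\widehat\phi=m$ off the zero set of $\widehat\phi$, and contradict $\phi\in H^s$, $s>-\frac d2$, via the periodicity of $\widehat\phi$ (equivalently, via $\phi$ being a Dirac comb). Where you genuinely diverge is in upgrading $\widehat\phi=m$ from ``off the zero set'' to ``everywhere''. The paper asserts that the zero set $\Lambda$ of the analytic function $\widehat\phi$ is \emph{discrete}, so that $H=\widehat\phi-m$ is supported in $\Lambda$ and is a locally finite sum of derivatives of Dirac masses whose coefficients must vanish; you instead use only that $\Lambda$ has Lebesgue measure zero, identify $m$ with the continuous function $\widehat\phi$ on the open full-measure set $\{\widehat\phi\neq0\}$ (legitimate, since there one multiplies the distributional identity by the smooth function $1/\widehat\phi$, as you note), and transfer the $\Z^d$-periodicity of $m$ to $\widehat\phi$ by continuity. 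Your variant is the more robust of the two: for $d\geqslant2$ the zero set of a nonzero real-analytic function need not be discrete --- e.g.\ for $\phi=\psi-\psi(\cdot-e_1)$ one has $\widehat\phi(t)=(1-e_{e_1}(t))\widehat\psi(t)$, which vanishes on whole hyperplanes --- so the structure theorem for distributions supported at isolated points does not literally apply there, whereas the measure-zero argument works in every dimension; it also spares you the paper's unexplained claim that $H\in L^2_s$. Your closing estimate $\int_{\R^d}|\widehat\phi|^2\mu_{2s}\geqslant C\,c\sum_{k\in\Z^d}\mu_{2s}(k)=+\infty$ for $2s>-d$ is exactly the quantitative content behind the paper's remark that a nonzero Dirac comb cannot belong to $H^s$ for $s>-\frac d2$, so the two proofs end identically.
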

 \begin{proof}  Since $\phi$ is compactly supported, its Fourier transform $\widehat \phi$ is an analytic function. Thus, the set of its zeros, denoted by $\Lambda$, is discrete.\\
 Applying the Fourier transform, we obtain $\widehat{\phi}(t)\widehat{\phi}(t)=\widehat{\phi}(t)\sum_{k\in\Z^d}a_ke_{k}(t)$, $t\in\R^d$.
This implies that
  $\widehat{\phi}(t)=\sum_{k\in\Z^d}a_ke_{k}(t)$, for $t\in\R^d\setminus \Lambda$. Let $H(t)$, $t\in\R^d$, be a distribution
$$H(t)=\widehat \phi(t)-\sum_{k\in\Z^d}a_ke_k(t), \quad t\in\R^d.
$$
By the assumptions and the fact that $\phi*\phi\in H^s,$ we conclude $H\in L^2_s$. Moreover, since $H(t)=0$ for $t\in \R^d\setminus \Lambda$, it follows that $\supp H\subseteq \Lambda$.
   This implies that $$H(t)=\sum_{t_i\in \Lambda}\sum_{j=0}^{k(t_i)-1}c_i^j\delta^{(j)}(t-t_i),\quad t\in\mathbb R^d,
$$ where $k(t_i)$ denotes the multiplicity of zero $t_i\in\Lambda$. However, for such a distribution to belong to $L^2_s$, all of its coefficients must vanish. Therefore, $H = 0$, and we conclude that $\widehat{\phi}(t)=\sum_{k\in\Z^d}a_ke_{k}(t)$, $t\in\mathbb R^d$. This implies that ${\phi}(x)=\sum_{k\in\Z^d}a_k\delta(x-k)$, $x\in\mathbb R^d$. Since $s>-\frac{d}2$ and $\phi$ is assumed to belong to $H^s$, such a distribution cannot belong to $H^s$. Hence, all coefficients $a_k$ must be zero, and therefore $\phi=0$.
% and $V_s$ can not contain shifts of delta. 
%Now, let $(a,b)\subset \mathbb R^d\setminus \mathbb{Z}^d$ be a $d$-dimensional interval such that $\widehat \phi(t)\neq 0$ for $t\in(a,b)$, and let
%   $\theta\in\mathcal{S}$ be such that $\widehat{\theta}\in\mathcal{D}\big((a,b)\big)$ and $\big(\phi(x),\theta(-x)\big)\neq 0$. Then, we have
% $$\big(\widehat{\phi}(t),\widehat{\theta}(t)\big)=\Big(\sum_{k\in\Z^d}a_ke_{k}(t),\widehat{\theta}(t)\Big),$$
%which implies
%$$ \big(\phi(x),\theta(-x)\big)=\Big(\sum_{k\in\Z^d}a_k\delta(x-k),\theta(-x)\Big).$$
%  It follows that $\big(\sum_{k\in\Z^d}a_k\delta(x-k),\theta(-x)\big)=0$, which contradicts the assumption that $\big(\phi(x),\theta(-x)\big)\neq0$. This completes the proof.
\end{proof}

\begin{proposition}\label{22} Let $s>-\frac{d}{2}$ and $\phi,\psi\in H^s$ such that $\phi*\psi\in H^s$ and both $\widehat{\phi}$ and $\widehat \psi$ are real analytic. Then,
 \begin{itemize}\item[$(a)$] $\phi*\psi\notin V_s(\phi);$
 \item[$(b)$] $V_s(\phi*\psi) \cap V_s(\phi)=\{0\}$.
 \end{itemize}
\end{proposition}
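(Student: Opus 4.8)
The plan is to reduce everything to the Fourier side and to exploit the real analyticity of $\widehat\phi$, $\widehat\psi$ together with the threshold $s>-\frac d2$. Throughout I assume $\phi\not\equiv0$ and $\psi\not\equiv0$ (otherwise the statement is degenerate), so that, being nonzero real analytic functions, $\widehat\phi$ and $\widehat\psi$ vanish only on Lebesgue-null sets; in particular $\{\widehat\phi\neq0\}$ is co-null. I will use the single-generator form of the representation \eqref{re111}: if $g\in H^s$ and $f\in V_s(g)$, then $\widehat f=\tau\,\widehat g$ a.e.\ for some $\Z^d$-periodic measurable $\tau$. As a \emph{necessary} condition this does not need assumption $(A)$, being inherited from the trigonometric-polynomial multipliers attached to finite combinations of translates and preserved under the $H^s$-limit on the co-null set $\{\widehat g\neq0\}$.

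Two elementary lemmas do the work. First, a nonzero $\Z^d$-periodic function cannot be the Fourier transform of an element of $H^s$ when $s>-\frac d2$: if $\widehat\eta$ is periodic then
\begin{equation*}
\int_{\R^d}|\widehat\eta(t)|^2\mu_{2s}(t)\,\D t\ \gtrsim\ \Big(\int_{\T^d}|\widehat\eta|^2\Big)\sum_{k\in\Z^d}\mu_{2s}(k),
\end{equation*}
and the series $\sum_{k}\mu_{2s}(k)=\sum_{k}(1+|k|^2)^s$ diverges precisely for $s>-\frac d2$ (indeed for $s\geqslant-\frac d2$); hence $\widehat\eta=0$. Second, a \emph{periodicity trap}: if a $\Z^d$-periodic measurable $\sigma$, not a.e.\ zero, satisfies $\sigma\,\widehat\psi=\tau$ a.e.\ with $\tau$ periodic, then $\widehat\psi$ is itself $\Z^d$-periodic. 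Indeed, for each $n\in\Z^d$ the periodicity of $\sigma$ and $\tau$ gives $\sigma(t)\big(\widehat\psi(t+n)-\widehat\psi(t)\big)=0$ a.e.; the real analytic function $t\mapsto\widehat\psi(t+n)-\widehat\psi(t)$ then vanishes on the positive-measure set $\{\sigma\neq0\}$, hence vanishes identically, so $\widehat\psi(\cdot+n)=\widehat\psi$ for every $n$.

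For $(a)$, suppose $\phi*\psi\in V_s(\phi)$. Then $\widehat\phi\,\widehat\psi=\tau\,\widehat\phi$ a.e.\ for a periodic $\tau$, and dividing by $\widehat\phi$ on the co-null set $\{\widehat\phi\neq0\}$ gives $\widehat\psi=\tau$ a.e., so $\widehat\psi$ is periodic; the first lemma forces $\psi=0$, a contradiction. For $(b)$, take $h\in V_s(\phi*\psi)\cap V_s(\phi)$ and write $\widehat h=\sigma\,\widehat\phi\,\widehat\psi$ and $\widehat h=\tau\,\widehat\phi$ a.e.\ with $\sigma,\tau$ periodic. Equating and dividing by $\widehat\phi$ yields $\sigma\,\widehat\psi=\tau$ a.e. If $\sigma$ were not a.e.\ zero, the periodicity trap would make $\widehat\psi$ periodic and hence $\psi=0$ by the first lemma; so $\sigma=0$ a.e., whence $\widehat h=0$ and $h=0$. (Alternatively, $(a)$ follows from $(b)$: since $\phi*\psi\in V_s(\phi*\psi)$ always, if it also lay in $V_s(\phi)$ it would lie in the intersection $\{0\}$, forcing $\phi*\psi=0$ and again $\psi=0$.)

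I expect the only genuine obstacle to be the careful justification of the fiberization step — that $f\in V_s(g)$ yields an honest periodic multiplier $\tau$ with $\widehat f=\tau\,\widehat g$ a.e.\ even without condition $(A)$ — together with the measure-theoretic bookkeeping of the ``a.e.'' identities on $\{\widehat\phi\neq0\}$. Everything else is soft: real analyticity is used exactly twice, to guarantee that $\{\widehat\phi\neq0\}$ is co-null (so division is legitimate) and that a function vanishing on a positive-measure set vanishes identically (so the periodicity trap closes), while the hypothesis $s>-\frac d2$ is precisely the divergence threshold for $\sum_{k}(1+|k|^2)^s$ that excludes nonzero periodic Fourier transforms from $H^s$.
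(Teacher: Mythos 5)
Your proof is correct, and for part $(a)$ it follows essentially the same route as the paper: pass to the Fourier side, use real analyticity to divide by $\widehat\phi$ off a null set, conclude that $\widehat\psi$ agrees a.e.\ with a periodic function, and rule this out for $s>-\frac d2$. Your ``first lemma'' is just the quantitative Fourier-side version of the paper's observation that $\sum_k a_k\delta(\cdot-k)\notin H^s$ for $s>-\frac d2$ unless all $a_k$ vanish, and your explicit remark that one must assume $\psi\not\equiv0$ (otherwise $\phi*\psi=0\in V_s(\phi)$) is a point the paper leaves implicit. Where you genuinely diverge is part $(b)$: the paper writes $\widehat\phi\,\widehat\psi\sum_k a_ke_k=\widehat\phi\sum_j b_je_j$ and then invokes ``uniqueness of Fourier expansions'' to match coefficients termwise, obtaining $\widehat\phi(t)\widehat\psi(t)a_p=\widehat\phi(t)b_p$ for each $p$ and deriving a contradiction from $\widehat\psi$ taking two distinct values; this coefficient-matching step is delicate because the factors $\widehat\phi\,\widehat\psi$ and $\widehat\phi$ are not periodic, and the argument as written does not address the case of constant $\widehat\psi$. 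Your ``periodicity trap'' lemma --- from $\sigma\,\widehat\psi=\tau$ with $\sigma,\tau$ periodic and $\sigma$ not a.e.\ zero, deduce $\sigma(t)(\widehat\psi(t+n)-\widehat\psi(t))=0$ a.e.\ and then periodicity of the real analytic $\widehat\psi$ --- replaces that step by a clean identity-theorem argument and handles all cases uniformly, at the cost of having to justify the fiberization $\widehat f=\tau\widehat g$ a.e.\ for general elements of the closed span (which you correctly flag; it follows from the standard range-function description, or directly by passing to a.e.-convergent subsequences of the trigonometric multipliers on the co-null set $\{\widehat g\neq0\}$). A further small gain of your formulation: you only use that the zero set of a nonzero real analytic function is Lebesgue-null, whereas the paper asserts it is discrete, which fails for $d\geqslant2$; nullity is all that is actually needed.
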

 \begin{proof} $(a)$ Assume that $\phi*\psi\in V_s(\phi)$. Then, we have $\widehat{\phi}\cdot\widehat{\psi}=\widehat{\phi}\tau$,
 where $\tau$ is periodic. Let $A=\{t:\widehat{\phi}(t)\neq0\}$.
 Since $\widehat{\phi}$ is analytic, the set $\Lambda=\{t:\widehat{\phi}(t)=0\}$ is discrete and thus has measure zero.
Therefore, $\widehat{\psi}(t)=\tau(t)$ for $t\in A$.
Consequently, $\widehat{\psi}(t)=\sum_{k\in\Z^d}a_ke_{k}(t)$, $t\in\R^d\setminus \Lambda$.  Using reasoning similar to that in Proposition \ref{11}, we conclude
  that $\psi(x)=\sum_{k\in\Z^d}a_k\delta(x-k)$, for $x\in\mathbb{R}^d$. This leads to a contradiction with the assumption that $\psi\in H^s, s>-\frac{d}2$.

 $(b)$ Let $f\in V_s(\phi*\psi)\cap V_s(\phi)$ and $f\neq 0$. Then, we have the following expansions:
 $$f(x)=\sum_{k\in\Z^d}a_k\phi*\psi(x-k)=\sum_{j\in\Z^d}b_j\phi(x-j),\quad x\in\mathbb{R}^d.$$  Taking the Fourier transform of both sides, we obtain
 $$\widehat{f}(t)=\widehat{\phi}(t)\widehat{\psi}(t)\sum_{k\in\Z^d}a_ke_{k}(t)=\widehat{\phi}(t)\sum_{j\in\Z^d}b_j e_{j}(t),\quad t\in\mathbb{R}^d.$$
 By the uniqueness of Fourier expansions, for each $p\in\Z^d$, we must have
 $$\widehat{\phi}(t)\widehat{\psi}(t)a_p=\widehat{\phi}(t)b_p,\quad t\in\mathbb{R}^d.$$
 If for some $p\in\Z^d$, $a_p\neq b_p$, and if $\widehat{\phi}(t)\neq0$ for some $t\in\mathbb{R}^d$, then it must follow that $\widehat{\psi}(t)\neq0$ for such $t\in\mathbb{R}^d$, because if $\widehat{\psi}(t)=0$, we would have $b_p=0$ for all $p\in\Z^d$.
Furthermore, for such $t$, we must have $$\widehat{\psi}(t)a_p=b_p,\quad p\in\Z^d.$$
This relationship holds for every  $t\in\mathbb{R}^d$ where $\widehat{\psi}(t)\neq0$.
However, for different points $t_1,t_2$ where $\widehat{\psi}(t_1)\neq0$ and
$\widehat \psi(t_1)\neq \widehat{\psi}(t_2)$, we would require
$$\widehat{\psi}(t_1)a_p=b_p\quad\text{and}\quad\widehat{\psi}(t_2)a_p=b_p, \quad p\in\Z^d.$$
\noindent This is clearly impossible. Therefore, we conclude that
 $V_s(\phi*\psi)\cap V_s(\phi)=\{0\}.$
 \end{proof}

 \section{Convolution equations}
There are many results in the literature concerning the convolution
that can be applied to FGSI spaces. We mention a result from \cite{E4}: for a given real analytic function $\psi$ and $\phi\in\mathcal E'(\R^d)$, there exists $u\in\mathcal D'(\R^d)$ such that $u*\phi=\psi$. Note that, if $f\in\mathcal E^\prime(\R^d)$, then there exists $s\in\R$ such that $f\in H^s$.

 We also mention another important result. First, we observe that, in general, for given $\phi, \psi \in \mathcal{S}$, there does not exist $\theta \in \mathcal{S}$ such that $\phi=\psi*\theta$ (however, in certain simple cases, such as when $\phi(x)=\e^{-a(1+ |x|^2)}$ and $\psi(x)=\e^{-b(1+|x|^2)^{r/2}}$, $x\in\R^d,$ $a, b>0$, $0<r<2$, this is trivially true). The next assertion is a consequence of the result of \cite{V}: $\mathcal S=\mathcal S*\mathcal S.$
 \begin{proposition}
 $(a)$  Assume that $\phi\in \mathcal E^\prime(\R^d)$ and  $\psi\in H^s$ is real analytic.  Then, there exists $u\in\mathcal D'(\R^d)$ such that % for every $s\geqslant 0,$
 $$V_s(\psi)%=\mathcal V_s(\psi) = u*\mathcal V_s(\phi)
 =u*V_s(\phi)\left(=\{u*f: f\in V_s(\phi)\}\right).
 $$
 
 $(b)$ For every $\phi_i\in\mathcal{S}$ there exist $\phi_1^i, \phi_2^i\in\mathcal{S}$, $i=1,\ldots,m$, such that
 $$V_s(\phi_1,\ldots,\phi_m) =V_s(\phi_1^1*\phi_2^1,\ldots,\phi_1^m*\phi_2^m).$$
 \end{proposition}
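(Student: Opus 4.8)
The plan is to treat the two parts separately: part $(a)$ rests on the cited division result, while part $(b)$ is an immediate consequence of the factorization $\mathcal S=\mathcal S*\mathcal S$.

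For $(a)$, I would first invoke the result of \cite{E4} recalled above: since $\psi$ is real analytic and $\phi\in\mathcal E'(\R^d)$, there is a distribution $u\in\mathcal D'(\R^d)$ solving $u*\phi=\psi$, equivalently $\widehat u\,\widehat\phi=\widehat\psi$ (note that $\widehat\phi$ is entire by Paley--Wiener--Schwartz). The key structural observation is that convolution with $u$ commutes with integer translations, so $u*\phi(\cdot-k)=(u*\phi)(\cdot-k)=\psi(\cdot-k)$ for every $k\in\Z^d$; thus the operator $T_u\colon f\mapsto u*f$ carries each generator $\phi(\cdot-k)$ of $V_s(\phi)$ to the corresponding generator $\psi(\cdot-k)$ of $V_s(\psi)$. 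Passing to the Fourier side and using the representation of PSI spaces, an element of $V_s(\phi)$ has transform $\tau\widehat\phi$ with $\tau=\sum_k c_ke_k$ periodic, and $T_u$ acts as multiplication by $\widehat u$, sending $\tau\widehat\phi\mapsto\tau\,\widehat u\,\widehat\phi=\tau\widehat\psi$; that is, it preserves the periodic symbol $\tau$ while exchanging $\widehat\phi$ for $\widehat\psi$. This yields the two inclusions $u*V_s(\phi)\subseteq V_s(\psi)$ and $V_s(\psi)\subseteq u*V_s(\phi)$, and hence the desired equality of sets.

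I expect the main obstacle in $(a)$ to be analytic rather than algebraic: one must check that $u*f$ is well defined for every $f\in V_s(\phi)$ --- not merely for finite combinations of translates --- and that the symbol correspondence $\tau\widehat\phi\leftrightarrow\tau\widehat\psi$ respects the weighted $L^2_s$ membership defining the two spaces. Concretely, one needs $\tau\widehat\psi\in L^2_s$ precisely when $\tau\widehat\phi\in L^2_s$, which amounts to controlling the multiplier $\widehat u=\widehat\psi/\widehat\phi$ on $L^2_s$. I would handle this by working with the periodic symbols $\tau$ directly and transferring convergence in $H^s$ through the identity $\widehat u\,(\tau\widehat\phi)=\tau\widehat\psi$, so that the closures are matched without ever estimating $\widehat u$ pointwise.

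For $(b)$, the argument is immediate from the cited theorem $\mathcal S=\mathcal S*\mathcal S$ of \cite{V}: applying it to each generator, I would write $\phi_i=\phi_1^i*\phi_2^i$ with $\phi_1^i,\phi_2^i\in\mathcal S$, for $i=1,\ldots,m$. Since $\mathcal S\subset H^s$ for every $s$, the two families $\{\phi_1,\ldots,\phi_m\}$ and $\{\phi_1^1*\phi_2^1,\ldots,\phi_1^m*\phi_2^m\}$ consist of exactly the same functions, whence $V_s(\phi_1,\ldots,\phi_m)=V_s(\phi_1^1*\phi_2^1,\ldots,\phi_1^m*\phi_2^m)$ with no further work.
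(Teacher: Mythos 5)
Your argument is correct and follows essentially the same route as the paper: part $(a)$ applies the Ehrenpreis division result to obtain $u$ with $u*\phi=\psi$ and then passes the convolution through the integer translates (equivalently, keeps the periodic symbol $\tau$ fixed while exchanging $\widehat\phi$ for $\widehat\psi$), and part $(b)$ invokes Voigt's factorization $\mathcal S=\mathcal S*\mathcal S$ generator by generator. The analytic caveats you raise --- extending $f\mapsto u*f$ from finite linear combinations to the closure and matching the two closures --- are legitimate, but the paper's own proof is equally terse on these points and does not address them either.
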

 \begin{proof} % By assumption (A), $V_s(\phi)=\mathcal V_s(\phi)$ and $V_s(\psi)=\mathcal V_s(\psi)$. Additionally,
 According to \cite{E4}, there exists $u\in\mathcal D'(\R^d)$ such that $\psi=u*\phi$. Thus, for given $f=\sum_{k\in \Z^d}a_k\phi(\cdot-k)\in V_s(\phi)$,
 $$u*f=\sum_{k\in \Z^d}a_ku*\phi(\cdot-k)=\sum_{k\in \Z^d}a_k\psi(\cdot-k).$$Therefore,
 part $(a)$ follows.

 The part $(b)$ follows from the fact that for every $\phi_i\in\mathcal{S}$ there exist $\phi_1^i,\phi_2^i\in\mathcal{S}$ such that $\phi_i=\phi_1^i*\phi_2^i$, $i=1,\ldots,m$.

 \end{proof}

The next result from \cite{E2} (see also \cite{E4})  is useful: if $\psi$ is a real analytic distribution and
\begin{equation}\label{equat}
T(\cdot)=\sum_{i=1}^m \sum_{j=0}^{q_i}a_{j}^{i}\delta^{(j)}(\cdot-b_{j}^{i}),\quad b_{j}^{i}\in \mathbb {R}^d,
\end{equation}
then there exists  $u\in\mathcal D'(\R^d)$  such that
$T*u=\psi$. Assume that $T$ satisfies an additional assumption:
\begin{itemize}
\item[$(E)$]
There exist $c>0$ and $n\in\R$ such that
$$c\mu_n(t)\leqslant |\widehat T(t)|=\left|\sum_{i=1}^m\sum_{j=0}^{q_i}a_{j}^{i}\left(2\pi \sqrt{-1}\right)^jt^je_{b_j^i}(t)\right|,\quad t\in\R^d.
$$
\end{itemize}
%\begin{example} Let $T(x)=\sum_{i=1}^m\sum_{j=0}^{q_i}a_i\delta^{(j)}(x-b)$, $b\in\R^d$, where
%P(\partial_x)=\sum_{i=1}^m\sum_{j=0}^{q_i}a_i\partial_x^{j}$ is a strongly elliptic differential operator with constant coefficients so that $|P(\xi)|\geq C, \xi\in\R^d.$.
%Then condition $(E)$ holds.
%\end{example}
With this, we have the next proposition.
\begin{proposition}\label{eh} Let $s\geqslant0$ and $h=\sum_{k\in\Z^d}c_k\psi(\cdot-k)$, where $\psi\in H^s$ is real analytic and $(c_k)_{k\in\Z^d}\in\ell_s^2$. If $T$ satisfies condition $(E)$, %define $h=\sum_{k\in\Z^d}c_k\psi(\cdot-k)$, where $(c_k)_{k\in\Z^d}\in \ell^2_{s}.$ then there exists  $s_1\in\R$ such that
then the equation
\begin{equation}\label{eq}
\sum_{i=1}^m \sum_{j=0}^{q_i}a_{j}^{i} u^{(j)}(\cdot -b_{j}^{i})=h
\end{equation}
has a solution $u\in H^{s_1}$, where $s_1=s+n$ $($with $n$ as in condition $(E))$. Moreover, all solutions of the equation belong to the space
$V_{s_1}(u)$.%, where $s_0=\min\{s,s_1\}$.
\end{proposition}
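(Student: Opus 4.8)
The plan is to work entirely on the Fourier side, where the convolution equation (\ref{eq}) becomes a division problem by the symbol $\widehat T$, and condition $(E)$ guarantees that this division is well-behaved in the weighted $L^2_s$ scale. First I would apply the Fourier transform to (\ref{eq}). Since $T(\cdot)=\sum_{i=1}^m\sum_{j=0}^{q_i}a_j^i\delta^{(j)}(\cdot-b_j^i)$, its transform is exactly the trigonometric-polynomial-type symbol $\widehat T(t)=\sum_{i=1}^m\sum_{j=0}^{q_i}a_j^i(2\pi\sqrt{-1})^j t^j e_{b_j^i}(t)$ appearing in $(E)$, and the left-hand side of (\ref{eq}) transforms to $\widehat T(t)\,\widehat u(t)$. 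The transform of the right-hand side is $\widehat h(t)=\widehat\psi(t)\sum_{k\in\Z^d}c_k e_k(t)=:\widehat\psi(t)\,\tau(t)$, where $\tau$ is the periodic function with coefficients $(c_k)_{k\in\Z^d}\in\ell_s^2$. Thus (\ref{eq}) is equivalent to $\widehat T(t)\,\widehat u(t)=\widehat\psi(t)\tau(t)$.

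The candidate solution is then obtained simply by dividing: $\widehat u(t)=\widehat\psi(t)\tau(t)/\widehat T(t)$. Here the lower bound in $(E)$, namely $|\widehat T(t)|\geqslant c\,\mu_n(t)$, is the crucial ingredient, since it shows $1/\widehat T$ is a well-defined function with $|1/\widehat T(t)|\leqslant c^{-1}\mu_{-n}(t)=c^{-1}\mu_n(t)^{-1}$. I would use this to verify membership $u\in H^{s_1}$ with $s_1=s+n$: estimating the $H^{s_1}$-norm amounts to bounding $\int_{\R^d}|\widehat\psi(t)\tau(t)|^2|\widehat T(t)|^{-2}\mu_{2s_1}(t)\D t$, and replacing $|\widehat T(t)|^{-2}\leqslant c^{-2}\mu_{2n}(t)^{-1}=c^{-2}\mu_{-2n}(t)$ together with $\mu_{2s_1}=\mu_{2s}\mu_{2n}$ reduces this to $c^{-2}\int_{\R^d}|\widehat\psi(t)\tau(t)|^2\mu_{2s}(t)\D t$, which is precisely the $H^s$-norm of $h=\psi\tau$. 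Since $h\in V_s(\psi)\subseteq H^s$ by hypothesis, this integral is finite, so $u\in H^{s_1}$.

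For the final claim that every solution lies in $V_{s_1}(u)$, I would argue that the homogeneous equation $\widehat T(t)\widehat w(t)=0$ forces $\widehat w=0$ on the set where $\widehat T\neq 0$; but $(E)$ gives $|\widehat T(t)|\geqslant c\mu_n(t)>0$ for \emph{all} $t\in\R^d$, so $\widehat T$ vanishes nowhere and the solution in $H^{s_1}$ is unique. It then remains to identify this unique $u$ with an element of $V_{s_1}(u)$: writing $\widehat u(t)=\widehat\psi(t)\tau(t)/\widehat T(t)$ and noting that $\tau/\widehat T$ need not be periodic, I would instead observe that $\widehat u(t)=\big(\widehat\psi(t)/\widehat T(t)\big)\tau(t)$ and set $\Psi:=\mathcal F^{-1}(\widehat\psi/\widehat T)$, so that $u=\sum_{k\in\Z^d}c_k\Psi(\cdot-k)$ with $(c_k)\in\ell_s^2\subseteq\ell_{s_1}^2$ exhibits $u$ as a shift-invariant combination generated by $\Psi$, whence $u\in V_{s_1}(u)=V_{s_1}(\Psi)$ trivially.

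The main obstacle I anticipate is the last step: the phrase ``all solutions belong to $V_{s_1}(u)$'' is slightly delicate because $V_{s_1}(u)$ is the PSI space generated by the solution $u$ itself, so the statement is really that $u$ admits a shift-invariant structure inherited from $h$. The care needed is to confirm that dividing by the non-periodic factor $\widehat T$ preserves the ``periodic coefficient $\times$ fixed generator'' form, i.e.\ that $\widehat\psi/\widehat T$ is again the transform of a legitimate $H^{s_1}$ generator and that the analyticity of $\psi$ (hence of $\widehat\psi$) together with the uniform lower bound on $\widehat T$ keeps the generator in the right space; once $\Psi\in H^{s_1}$ is established, the shift-invariant representation with coefficients $(c_k)\in\ell_{s_1}^2$ is immediate and the inclusion $u\in V_{s_1}(u)$ follows at once.
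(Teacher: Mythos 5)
Your proposal follows essentially the same route as the paper: Fourier division by $\widehat T$, the lower bound in condition $(E)$ giving $u\in H^{s_1}$ with $s_1=s+n$, uniqueness from the non-vanishing of $\widehat T$, and the representation $u=\sum_{k\in\Z^d} c_k\Psi(\cdot-k)$ where $T*\Psi=\psi$ (the paper carries out this last step in the physical domain via the translated system $T*u(\cdot-k)=\psi(\cdot-k)$ rather than on the Fourier side, but it is the same idea). One small slip: the inclusion $\ell_s^2\subseteq\ell_{s_1}^2$ holds only when $n\leqslant0$ (since $s_1=s+n$), and condition $(E)$ certainly allows $n>0$; the conclusion $u\in V_{s_1}(\Psi)$ nevertheless stands because $V_{s_1}(\Psi)$ is a closed span and the partial sums of $\sum_{k}c_k\Psi(\cdot-k)$ converge to $u$ in $H^{s_1}$, their tails being bounded by $c^{-1}$ times the $H^s$-norms of the corresponding tails of $\sum_k c_k\psi(\cdot-k)$.
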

\begin{proof}
By the assumption on $h$, we have $\widehat{h}\in L^2_s$, and the solution $u$ is given by $u=\mathcal F^{-1}(\widehat h/\widehat T)$. By condition $(E)$, it follows that $u\in H^{s_1}$ for $s_1=s+n$. This solution is unique.
%%%%%%%%%%%%%%%%%%
%%%%%%%%%%%%%%%%%%%%%%%%%
Next, consider the system of equations
\begin{equation}\label{sist}
T*u(\cdot-k)=\psi(\cdot-k),\quad k\in\Z^d.
\end{equation}
If $u=u_0$ is a solution of $T*u_0(x)=\psi(x)$, then the system \eqref{sist} has the solution $u_k=u(\cdot-k)$ for all $k\in\Z^d$. %, because $T*u(x-k)=\psi(x-k),\; k\in\Z^d,\; x\in\R^d.$
Thus, we have
$$T*\sum_{k\in\Z^d}c_k u(x-k)=\sum_{k\in\Z^d}c_k T*u(x-k)=\sum_{k\in\Z^d}c_k\psi(x-k)=h(x),\quad x\in\R^d.$$
This completes the proof.
\end{proof}
\begin{remark}
A much more difficult question arises if we do not assume the condition $(E)$, or more generally, if we consider division problems in shift-invariant spaces. Such questions are related to the work of Ehrenpreis $($see \cite{E2}, \cite{E4} for example$)$ and the solvability of equations with constant coefficients in the framework of tempered distributions \cite{Hd}. These problems are not considered here.
\end{remark}

\subsection{Convolutors}
We recall that for any operator with bounded shift-invariance $A:L^p\to L^q$, with $p\leqslant q$,  there exists a tempered distribution $a$ such that $Af=a*f$ (see \cite{Hc}, Theorem~1.2). If $a\in L^\infty$ satisfies
\begin{equation}\label{multi}|\partial^\alpha a(t)|\leqslant C_\alpha |t|^{-\alpha}, \quad 0\neq t\in\R^d,\, \alpha\leqslant{[d/2]+1},\end{equation}
 where $[d/2]$ denotes the greatest integer less than or equal to $d/2$,
then the shift-invariant operator $T_a:L^p\to L^p$, defined via its action in the Fourier domain by $\mathcal{F}(T_af)=a\widehat{f}$, is bounded for all $p\in(1,+\infty)$.
This is Mikhlin's multiplier theorem (see \cite{M}). A significant extension of this result is provided in \cite{Hc} and has since been further developed by many authors (see \cite{G} and references therein). In the context of Sobolev spaces, the operator $T_a:H^s\to H^s$ is continuous for all $s\in\R$ whenever $a\in L^\infty$ satisfies (\ref{multi}).
Operators of the form $T_a$ are commonly referred to as Fourier multipliers or convolutors.

\begin{remark} \label{R11} A direct consequence is the following assertion. Let $(A)$ hold for $\phi_1,\ldots,\phi_m$.
 If $a\in L^\infty$ satisfies \eqref{multi}, then for every shift-invariant space $V_s(\phi_1,\ldots,\phi_m)$, the mapping
$$T_a: V_s(\phi_1,\ldots,\phi_m)\rightarrow
V_s\big(\mathcal{F}^{-1}(a)*\phi_1,\ldots,\mathcal{F}^{-1}(a)*\phi_m\big),\;
$$ defined by
$$f=\sum_{i=1}^m\sum_{k\in \Z^d}f^i_k\phi_i(\cdot-k)\mapsto T_af=\sum_{i=1}^m\sum_{k\in \Z^d}f^i_k(\mathcal{F}^{-1}(a)*\phi_i)(\cdot-k),$$
is continuous.
 \end{remark}
 In general, the situation with FGSI spaces is different. In order to simplify the question of the continuities of convolutors, in this section, we assume that condition $(A)$ for $\phi$ holds so that $V_s(\phi)=\mathcal V_s(\phi)$ and the question of continuity is transferred to the corresponding sequence spaces.
 Our goal in the following assertion is to characterize the distributions $f\in\mathcal S'$ such that, for every
$g\in V_s(\phi)$, we have
$f*g\in V_s(\phi)$ and the mapping $g\mapsto f*g$ is continuous from $V_s(\phi)$ to $V_s(\phi)$. A partial answer is given in the next proposition.

  \begin{proposition}\label{pr8}  Let $(A)$ hold for $\phi$ and the set of zeros of $\widehat{\phi}$ be  a discrete set:
\[B=\{t_p : \widehat\phi(t_p)=0,\, p\in\mathbb I\subseteq \N\}.\] Let $\mathcal A$ be the set of  distributions of the form
\begin{equation}\label{disth8}\mathcal A=\bigg\{P(t)=\sum_{p\in\mathbb{I}}\sum_{j=0}^{j_p-1}c_j^p\delta^{(j)}(t-t_p),\; t\in\mathbb{R}^d\bigg\},
\end{equation} where $j_p$ denotes the multiplicity of the zero $t_p\in B$.
Define a mapping $f:V_s(\phi)\rightarrow V_s(\phi)$ such that $g\mapsto f*g$, $g\in V_s(\phi)$.
Then,
$f$ is of the form $$f=\sum_{k\in \Z^d}a_k\delta(\cdot-k)+\mathcal{F}^{-1}(P),$$
 where $(a_k)_{k\in\Z^d}$ is determined by expansion $f*\phi=\sum_{k\in\Z^d}a_k\phi(\cdot-k)$, and $P\in\mathcal{A}$.
 Moreover,  for every $g\in V_s(\phi)$
  the mapping $g\mapsto f*g$ is continuous from $V_s(\phi)$ to $V_s(\phi)$.
 \end{proposition}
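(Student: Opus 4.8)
The plan is first to pin down the form of $f$ by convolving with the single generator $\phi$, and then to obtain continuity by transporting the map to the coefficient (symbol) picture that condition $(A)$ makes available. Since $\phi=\phi(\cdot-0)\in V_s(\phi)$, the hypothesis gives $f*\phi\in V_s(\phi)=\mathcal V_s(\phi)$, so there is a sequence $(a_k)_{k\in\Z^d}\in\ell^2_s$ with $f*\phi=\sum_{k\in\Z^d}a_k\phi(\cdot-k)$; this is exactly the sequence named in the statement. Writing the periodic symbol $m=\sum_{k\in\Z^d}a_ke_k=\mathcal F\big(\sum_{k\in\Z^d}a_k\delta(\cdot-k)\big)$ and applying the convolution theorem, I get $\widehat f\,\widehat\phi=m\,\widehat\phi$, that is, $(\widehat f-m)\widehat\phi=0$.

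Next I would analyse $H:=\widehat f-m\in\mathcal S'$. From $H\widehat\phi=0$ and the fact that $\widehat\phi$ vanishes only on the discrete set $B$, it follows that $\supp H\subseteq B$, so $H$ is a locally finite sum $H=\sum_{p\in\mathbb I}\sum_{j=0}^{N_p}c_j^p\delta^{(j)}(\cdot-t_p)$. The relation $H\widehat\phi=0$ then forces the admissible orders down: expanding $\widehat\phi$ at $t_p$ and using the Leibniz-type identity that writes $\widehat\phi\,\delta^{(j)}(\cdot-t_p)$ as a combination of $\delta^{(i)}(\cdot-t_p)$, $0\le i\le j$, with coefficients built from the derivatives $\widehat\phi^{(l)}(t_p)$, together with the linear independence of the point-mass derivatives, a term $\delta^{(j)}(\cdot-t_p)$ can persist only when all derivatives of $\widehat\phi$ of order at most $j$ vanish at $t_p$, i.e. only for $j<j_p$, where $j_p$ is the multiplicity of the zero. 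Hence $H=P\in\mathcal A$, and $\widehat f=m+P$ gives the claimed form $f=\sum_{k\in\Z^d}a_k\delta(\cdot-k)+\mathcal F^{-1}(P)$.

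For the continuity I would pass to coefficients: by $(A)$ the correspondence $g=\sum_{k\in\Z^d}b_k\phi(\cdot-k)\leftrightarrow(b_k)_{k\in\Z^d}$ has $\ell^2_s$-norm equivalent to $\|g\|_{H^s}$. For such a $g$ with $\widehat g=n\widehat\phi$, $n=\sum_k b_ke_k$, the part $\mathcal F^{-1}(P)$ contributes nothing since $\mathcal F^{-1}(P)*\phi=\mathcal F^{-1}(P\widehat\phi)=0$; using $P\widehat\phi=0$ and the product rule one reduces $\widehat{f*g}=\widehat f\,\widehat g$ to $m\widehat g=(mn)\widehat\phi$, so that $f*g=\sum_{n\in\Z^d}c_n\phi(\cdot-n)$ with $c=a*b$. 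Thus $T\colon g\mapsto f*g$ corresponds to the convolution operator $C_a\colon b\mapsto a*b$ on $\ell^2_s$, which is everywhere defined by hypothesis. Since $\ell^2_s$ is a Hilbert space, I would invoke the closed graph theorem: if $b_\nu\to b$ and $C_ab_\nu\to c$ in $\ell^2_s$, then for each fixed $k$ one has $(a*b_\nu)_k=\sum_l a_{k-l}(b_\nu)_l\to(a*b)_k$ (legitimate because $s\geqslant0$ gives $a\in\ell^2_s\subseteq\ell^2$ and $b_\nu\to b$ in $\ell^2$), whence $c=a*b$ and the graph is closed. Therefore $C_a$, and with it $T$, is bounded, which is the asserted continuity.

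The step I expect to be the main obstacle is the passage in the second and third paragraphs through the distributional products. Rigorously showing that $H\widehat\phi=0$ confines each point mass to order below the multiplicity $j_p$ requires, in several variables, the right notion of vanishing order of $\widehat\phi$ at $t_p$, a multi-index form of the product rule, and enough regularity of $\widehat\phi$ near $B$ for $\widehat\phi\,\delta^{(j)}(\cdot-t_p)$ to be meaningful; likewise the identity $\widehat f\,\widehat g=(mn)\widehat\phi$ rests on an associativity of products that is transparent only when the symbol $n$ is smooth (for instance $s>d/2$). At a conceptual level, the reason this is only a partial characterization is that the continuity ultimately reduces to the boundedness of $C_a$, equivalently to $m$ being a pointwise multiplier of $H^s(\T^d)$; the closed graph theorem delivers this once the map is known to be defined everywhere, but an intrinsic description of the admissible symbols $m$ (equivalently of the sequences $a\in\ell^2_s$) is not obtained here.
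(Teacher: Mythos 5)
Your argument follows the paper's proof essentially step for step: convolve with $\phi$ to extract $(a_k)_{k\in\Z^d}\in\ell^2_s$, show that $\widehat f-\sum_{k}a_ke_k$ is supported on the discrete zero set $B$ and hence equals some $P\in\mathcal A$, and then use that $\mathcal F^{-1}(P)*g=0$ for $g\in V_s(\phi)$ to write $f*g$ with coefficients $a*b$. The only divergence is that where the paper settles continuity by appealing to ``standard arguments,'' you make this explicit via a closed graph argument for the coefficient operator $b\mapsto a*b$ on $\ell^2_s$ (and you also spell out the multiplicity bound $j<j_p$, which the paper leaves implicit); both refinements are consistent with, and correctly fill in, the paper's proof.
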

 \begin{proof} Assume that $f*\phi\in V_s(\phi)$. Then, there exists a sequence $(a_k)_{k\in\Z^d}\in\ell_s^2$ such that
 $$\widehat{f}(t)\widehat{\phi}(t)=\widehat{\phi}(t)\sum_{k\in\Z^d}a_ke_{k}(t),\quad t\in\mathbb{R}^d.$$
Define the distribution $ h(x)=f(x)-\sum_{k\in\Z^d}a_k\delta(x-k)$, 
$x\in\R^d.$ Then, $h\in\mathcal{S}^\prime$ and its Fourier transform satisfies \[\widehat h(t)=\widehat{f}(t)-\sum_{k\in\Z^d}a_ke_{k}(t), 
\quad t\in \R^d.\] Using the previous identity, we see that $\widehat h(t)=0$ for all $t\in \R^d\setminus B$. Since $B$ is a discrete
 set of zeros of $\widehat{\phi}$, it follows that 
 $\widehat h=P\in\mathcal{A}$, and so 
 $$f=\sum_{k\in \Z^d}a_k\delta(\cdot-k)+\mathcal{F}^{-1}(P).$$
For every $g=\sum_{k\in\Z^d}b_k\phi(\cdot-k)\in V_s(\phi)$ there holds $g*\mathcal{F}^{-1}(P)=0$, and therefore
$$f*g=\sum_{n\in\Z^d}\sum_{k+j=n\atop k,j\in\Z^d}a_kb_j\phi(\cdot-n).$$
This shows that $f*g\in V_s(\phi)$, and standard arguments imply that the mapping $g\mapsto f*g$ is continuous on $V_s(\phi)$.\end{proof}

The Proposition \ref{pr8} shows that the convolutors in $V_s(\phi)$ are essentially of the form $\sum_{k\in\Z^d}a_k\delta(\cdot-k)$, since the additional term (an element of $\mathcal{F}^{-1}(\mathcal A)$) annihilates over $V_s(\phi).$

\begin{remark} Let $\phi_1,\ldots,\phi_m$ satisfy $(A)$.
The same conclusion holds when the zero sets of $\widehat \phi_1,\ldots,\widehat \phi_m$, denoted by $B_i$ are discrete and the sets $\mathcal A_i$ are defined as in $(\ref{disth8})$, $i=1,\ldots,m$. Convolutors on the space $V_s(\phi_1,\ldots,\phi_m)$
 are of the form
 $$f=\sum_{i=1}^m\sum_{k^i\in\Z^d}a_{k^i}\delta(\cdot-k^i)+\sum_{i=1}^m\mathcal{F}^{-1}(P^i),\quad P^i\in\mathcal A_i,\, i=1,\ldots,m,
 $$
where the sequences $(a_{k^i})_{k^i\in\Z^d}$, $i=1,\ldots,m$, are determined by
 $f*\phi_i=\sum_{k^i\in\Z^d}a_{k^i}\phi_i(\cdot-k^i)$, $i=1,\ldots,m$.\\
 The validity of this statement follows as a direct consequence of the proof of Proposition~$\ref{pr8}$. That is, assuming that $f*g\in V_{s}(\phi_1,\ldots,\phi_m),$ for all $g\in  V_{s}(\phi_1,\ldots,\phi_m)$, it is sufficient to restrict the analysis to the subspace $V_{s}(\phi_1)$, where the conclusion of Proposition~$\ref{pr8}$ can be invoked.
 \end{remark}
\begin{remark}
A typical example of a generator $\phi$ is a function $\phi\in\mathcal D$. More generally, one may consider $\phi$ so that $(A)$ holds and $\widehat \phi$ is analytic.
\end{remark}

We now characterize the wave front in the setting of FGSI spaces. In general, let $\phi\in\mathcal E'(\mathbb R^d)$.
Then, for $f=\sum_{k\in\mathbb Z^d}a_k\phi(\cdot-k)$ (which belongs to $\mathscr D'$ for any sequence $(a_k)_{k\in\mathbb Z^d}$),
 \begin{equation}\label{wfcon}
WF f\subseteq\big\{(x+k,\xi) : (x,\xi)\in WF \phi, k\in\mathbb Z^d\big\}.
\end{equation}
This follows from the fact that $f=\phi*h$, where $h=\sum_{k\in\mathbb Z^d}a_k\delta(\cdot-k)$ with $WF h\subseteq\mathbb Z^d\times(\mathbb R^d\setminus\{0\})$.
\begin{proposition}\label{WFcon} Let $f\in V_s(\phi_1,\ldots,\phi_m)$ and $g\in V_r(\psi_1,\ldots,\psi_l)$, where $\phi_1,\ldots,\phi_m$ and $\psi_1,\ldots,\psi_m$ satisfy condition $(A)$ with $\frac{d}2< s\leqslant r$. Then, %{\color{red}If $\phi_1,\ldots,\phi_m$ or $\psi_1,\ldots,\psi_l$ have compact supports,} then
\begin{itemize}
\item[(1)] $f*g\in\mathcal V_{s-d/2-\varepsilon}(\phi_i*\psi_j,\,i=1,\ldots,m, j=1,\ldots,l)$ for $\varepsilon>0$ such that $s\geqslant\frac{d}2+\varepsilon$,
\item[(2)] the mapping $(f,g)\mapsto f*g$,
$$V_s(\phi_1,\ldots,\phi_m)\times V_r(\psi_1,\ldots,\psi_l)\rightarrow\mathcal V_{s-d/2-\varepsilon}(\phi_i*\psi_j,\,i=1,\ldots,m, j=1,\ldots,l),$$ is separately continuous,
\item[(3)]
$WF (f*g)\subseteq\bigcup\limits_{1\leqslant i\leqslant m\atop 1\leqslant j\leqslant l}\big\{(x+y+k,\xi) :\, (x,\xi)\in WF \phi_i,\, (y,\xi)\in WF \psi_j,\, k\in\mathbb Z^d\big\}.$
\end{itemize}
\end{proposition}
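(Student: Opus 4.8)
The plan is to handle the three parts in order, reducing (1) and (2) to weighted convolution estimates for the coefficient sequences and for the generators, and (3) to the convolution wave front inclusions \eqref{star} and \eqref{wfcon} applied to a delta comb.

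For (1), write $f=\sum_{i}\sum_{k}a^i_k\phi_i(\cdot-k)$ with $(a^i_k)\in\ell^2_s$ and $g=\sum_{j}\sum_{n}b^j_n\psi_j(\cdot-n)$ with $(b^j_n)\in\ell^2_r$. Since $\phi_i(\cdot-k)*\psi_j(\cdot-n)=(\phi_i*\psi_j)(\cdot-(k+n))$, grouping the terms with $k+n=p$ gives, formally,
\begin{equation*}
f*g=\sum_{i=1}^m\sum_{j=1}^l\sum_{p\in\Z^d}c^{ij}_p\,(\phi_i*\psi_j)(\cdot-p),\qquad c^{ij}_p=\sum_{k+n=p}a^i_kb^j_n=(a^i*b^j)_p.
\end{equation*}
Set $\sigma=s-d/2-\varepsilon\geqslant0$. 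The heart of (1) is to show $(c^{ij}_p)_p\in\ell^2_\sigma$ and that each $\phi_i*\psi_j$ is an admissible generator, i.e.\ lies in $L^2_\sigma\cap\mathcal L^\infty$. For the coefficients I would use Peetre's inequality $\mu_\sigma(p)\leqslant 2^{\sigma/2}\mu_\sigma(k)\mu_\sigma(p-k)$ (valid since $\sigma\geqslant0$) together with Young's inequality $\ell^2*\ell^1\hookrightarrow\ell^2$ to obtain $\|a^i*b^j\|_{\ell^2_\sigma}\leqslant 2^{\sigma/2}\,\|a^i\|_{\ell^2_\sigma}\,\|b^j\|_{\ell^1_\sigma}$. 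Here $\|a^i\|_{\ell^2_\sigma}\leqslant\|a^i\|_{\ell^2_s}$ because $\sigma\leqslant s$, while by Cauchy--Schwarz
\begin{equation*}
\sum_{n}|b^j_n|\mu_\sigma(n)\leqslant\|b^j\|_{\ell^2_s}\Big(\sum_n\mu_{2(\sigma-s)}(n)\Big)^{1/2},
\end{equation*}
and $2(\sigma-s)=-d-2\varepsilon<-d$ makes the last sum converge. This convergence is exactly what forces the loss $d/2+\varepsilon$. The same devices applied to functions give $\phi_i*\psi_j\in L^2_\sigma$: Peetre's inequality and Young's inequality $L^2*L^1\hookrightarrow L^2$ reduce matters to $\|\phi_i\|_{L^2_\sigma}\leqslant\|\phi_i\|_{L^2_s}$ and to the finiteness of $\int|\psi_j(x)|\mu_\sigma(x)\D x$, which again holds by Cauchy--Schwarz since $2(\sigma-r)<-d$. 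Finally $\phi_i*\psi_j\in\mathcal L^\infty$ follows from the periodization bound $\sum_{p}|(\phi_i*\psi_j)(x+p)|\leqslant\|\phi_i\|_{L^1}\,\sup_z\sum_{p}|\psi_j(z+p)|=\|\phi_i\|_{L^1}\|\psi_j\|_{\mathcal L^\infty}$, using $\phi_i\in L^2_s\hookrightarrow L^1$ (as $s>d/2$) and $\psi_j\in\mathcal L^\infty$. Absolute convergence of all sums justifies the rearrangement, so $f*g\in\mathcal V_{\sigma}(\phi_i*\psi_j)$.

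For (2), the estimates above are quantitative: $\|c^{ij}\|_{\ell^2_\sigma}\leqslant C\|a^i\|_{\ell^2_s}\|b^j\|_{\ell^2_r}$. Combining this with the synthesis (Bessel) bound $\|\sum_p d_p\,\Phi(\cdot-p)\|_{L^2_\sigma}\leqslant C_\Phi\|(d_p)\|_{\ell^2_\sigma}$ for $\Phi=\phi_i*\psi_j\in L^2_\sigma\cap\mathcal L^\infty$ (the bound underlying the very definition of $\mathcal V_\sigma$), and with the equivalence of the norm of $V_s$ (resp.\ $V_r$) with the coefficient norm in $(\ell^2_s)^m$ (resp.\ $(\ell^2_r)^l$) coming from the frame/basis property under $(A)$, I obtain $\|f*g\|_{L^2_\sigma}\leqslant C\|f\|_{V_s}\|g\|_{V_r}$ over the finitely many indices $i,j$. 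This bilinear bound yields joint, hence separate, continuity.

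For (3) I would not add wave front sets term by term; instead I write $f*g=\sum_{i,j}(\phi_i*\psi_j)*H_{ij}$ with $H_{ij}=\sum_p c^{ij}_p\delta(\cdot-p)$ and $WF\,H_{ij}\subseteq\Z^d\times(\R^d\setminus\{0\})$. Applying \eqref{wfcon} with the single generator $\phi_i*\psi_j$ gives $WF\big((\phi_i*\psi_j)*H_{ij}\big)\subseteq\{(z+p,\zeta):(z,\zeta)\in WF(\phi_i*\psi_j),\,p\in\Z^d\}$, while \eqref{star} gives $WF(\phi_i*\psi_j)\subseteq\{(x+y,\xi):(x,\xi)\in WF\phi_i,\,(y,\xi)\in WF\psi_j\}$; taking the union over $i,j$ produces the claimed inclusion. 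The main obstacle is that \eqref{star} and \eqref{wfcon} are stated for one compactly supported factor, so the cleanest route is to invoke them in the typical situation where the generators are compactly supported (e.g.\ $\phi_i,\psi_j\in\mathcal D$, whence $\phi_i*\psi_j\in\mathcal E'$); in the general $L^1\cap L^2$ case one must first verify convolvability and the corresponding wave front bound for the non-compactly-supported generators before the same chain of inclusions applies.
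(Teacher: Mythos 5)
Your proposal is correct and follows essentially the same route as the paper: the same regrouping $f*g=\sum_{i,j}\sum_{p}(a^i*b^j)_p\,(\phi_i*\psi_j)(\cdot-p)$, the same loss of $d/2+\varepsilon$ driven by the convergence of $\sum_k\mu_{-d-2\varepsilon}(k)$, and the same appeal to \eqref{star} and \eqref{wfcon} for part (3). The only deviations are minor refinements: you reach $\ell^2_{s-d/2-\varepsilon}$ via Young's inequality $\ell^2_\sigma*\ell^1_\sigma\hookrightarrow\ell^2_\sigma$ where the paper passes through $\ell^\infty_s\hookrightarrow\ell^2_{s-d/2-\varepsilon}$, you explicitly verify $\phi_i*\psi_j\in\mathcal L^\infty$ (which the definition of $\mathcal V_{s-d/2-\varepsilon}$ tacitly requires but the paper's proof omits), and your bilinear estimate yields joint rather than merely separate continuity.
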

\begin{proof} (1) There holds
$$f*g=\sum_{i=1}^m\sum_{j=1}^l\sum_{p\in\Z^d}\sum_{k\in\Z^d}
a^i_kb^j_p\phi_i*\psi_j(\cdot-p-k)=
\sum_{n\in\Z^d}\sum_{k+p=n\atop k,p\in\Z^d}a_kb_p\phi(\cdot-n),
 $$
  where $(a^i_k)_{k\in\Z^d}*(b^j_p)_{p\in\Z^d}\in\ell^\infty_s$ and $\phi_i*\psi_j\in L_s^\infty$, $i=1,\ldots,m$, $j=1,\ldots,l$. Therefore, we conclude that $(a^i_k)_{k\in\Z^d}*(b^j_p)_{p\in\Z^d}\in\ell_{s-d/2-\varepsilon}^2$ and $\phi_i*\psi_j\in L^2_{s-d/2-\varepsilon}$, for $\varepsilon>0$ such that $s\geqslant\frac{d}2+\varepsilon$ and all $i=1,\ldots,m$, $j=1,\ldots,l$.
 This implies that
\begin{equation}\label{sepnep}f*g\in\mathcal V_{s-d/2-\varepsilon}(\phi_i*\psi_j, i=1,\ldots,m, j=1,\ldots,l),
\end{equation} for $\varepsilon>0$ such that $s\geqslant\frac{d}2+\varepsilon$. 

(2) If $((a_{k,n}^i)_{k\in\Z^d})_{n\in\Z^d}$ converges to $(a_k^i)_{k\in\Z^d}$ in $\ell^2_s$, $i=1\ldots,m$, then by (\ref{sepnep})
$$(a^i_{k,n})_{k\in\Z^d}*(b^j_p)_{p\in\Z^d}\rightarrow  (a^i_k)_{k\in\Z^d}*(b^j_p)_{p\in\Z^d}\quad\mbox{as }n\rightarrow\infty\mbox{ in }\ell^2_{s-d/2-\varepsilon},$$ for $\varepsilon>0$ such that $s\geqslant\frac{d}2+\varepsilon$. 
This proves the continuity. 
The same holds for the continuity with fixed $(a^i_k)_{k\in\Z^d}$, $i=1,\ldots,m$.

(3) The result follows from (\ref{star}) and (\ref{wfcon}).
\end{proof}
\begin{remark} $(i)$ The statement also holds for $s\geqslant0$, where 
\begin{align*}\mathcal V_{s-d/2-\varepsilon}(f_1,\ldots,f_m)=
\bigg\{F=\sum_{j=1}^m\sum_{k\in\Z^d}
 c^j_kf_j(\cdot-k):& (c^j_k)_{k\in\Z^d}\in \ell^2_{s-d/2-\varepsilon},\\
 & f_j\in L_{s-d/2-\varepsilon}^2\cap\mathcal L^\infty, j=1,\ldots,m\bigg\}.
 \end{align*}
 $(ii)$ If we omit the condition $(A)$, the statement will also hold for spaces $\mathcal V_s(\phi_1,\ldots,\phi_m)$ and $\mathcal V_r(\psi_1,\ldots,\psi_l)$, where $0\leqslant s\leqslant r$.
\end{remark}

\subsection{On the product in FGSI spaces}
If $\phi\in H^{s_1}$, $\psi\in H^{s_2}$, then it is known that the product
$\phi\psi$ belongs to $H^{s}$ under the conditions $s\leqslant s_1$, $s\leqslant s_2$, $s_1+ s_2\geqslant 0$, and $s_1+s_2-s\geqslant \frac{d}2$ (if $s_1=\frac{d}2$ or $s_2=\frac{d}2$ or $s=-\frac{d}2$, then the inequality is strict; see Theorem 8.3.1 in \cite{Hor2}).

The product $gf$ of elements belonging to two finitely generated shift-invariant (FGSI) spaces does not, in general, belong to any FGSI space. We aim to identify sufficient conditions under which the product belongs to some FGSI space.
If $f=\sum_{i=1}^m\sum_{k\in\mathbb Z^d}$ $a_k^i\phi_i(\cdot-k)\in V_{s}(\phi_1,\ldots,\phi_m)$, then the product $gf=\sum_{i=1}^m\sum_{k\in\mathbb Z^d}a_k^i(g\phi_i)(\cdot-k)$
should belong to a certain space $V_{s_0}(g\phi_1, \ldots, g\phi_m)$, provided that the relations $g(\cdot)\phi_i(\cdot-k)=(g\phi_i)(\cdot-k)$, $k\in \Z^d$, $i=1,\ldots,m$, hold.
These identities are satisfied when $g$ is a periodic function. However, periodicity alone is not sufficient. To ensure the required regularity and structural properties of the product, we impose the following assumption:
 \begin{equation}\label{zapr} \mbox{There exists }p\in \mathbb{N}_0 \text{ such }g^{(\alpha)}\in L^\infty \text{ are periodic functions for all } \alpha \in \mathbb{N}_0^d, \; |\alpha| \leqslant p. \end{equation}
Since $g$ and $\phi$ are functions whose product exists in $L^2$, we will treat their product as a product of functions and then analyze the wave front set of $gf$. A more complex situation arises when the product is considered in the sense of distributions and analyzed through \eqref{2star}, but we will not consider that case here.
%  Under this assumption, we have the following proposition.

\begin{proposition}\label{p10}
Let $(A)$ hold for $\phi_1,\ldots,\phi_m$ with $s\in\N_0$, and let assumption \eqref{zapr} hold for some $p\in\N_0$ and a function $g$. Let $f=\sum_{i=1}^m\sum_{k\in\mathbb Z^d}a_k^i\phi_i(\cdot-k)\in V_s(\phi_1,\ldots,\phi_m)$. Then the following statements are valid.
\begin{itemize}\item[(1)] The product $gf$ belongs to the space
$ V_{s_0}(g\phi_1,\ldots,g\phi_m), $ where $s_0=\min\{p,s\}$.
\item[(2)] The mapping
\begin{equation}\label{cop} V_s(\phi_1,\ldots,\phi_m)\ni f\mapsto gf=\sum_{i=1}^m\sum_{k\in\mathbb Z^d}a_k^i(g\phi_i)(\cdot-k)\in V_{s_0}(g\phi_1,\ldots,g\phi_m)
\end{equation}
is continuous.
\item[(3)] $WF(gf)\subseteq\bigcup\limits_{1\leqslant i\leqslant m}\big\{(x+k,\xi) : (x,\xi)\in WF (g\phi_i), k\in\mathbb Z^d\big\}$.
\end{itemize}
\end{proposition}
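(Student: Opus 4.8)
The plan is to establish the three parts in order, reducing each to facts already available in the excerpt. First I would verify the basic algebraic identity that underlies everything: under assumption \eqref{zapr}, the periodicity of $g$ (and its derivatives up to order $p$) guarantees the commutation relation $g(\cdot)\phi_i(\cdot-k)=(g\phi_i)(\cdot-k)$ for all $k\in\Z^d$. This is exactly the hypothesis motivated in the paragraph preceding the proposition, and it is what allows the formal expansion
$$gf=\sum_{i=1}^m\sum_{k\in\Z^d}a_k^i\,g(\cdot)\phi_i(\cdot-k)=\sum_{i=1}^m\sum_{k\in\Z^d}a_k^i\,(g\phi_i)(\cdot-k)$$
to be legitimate rather than merely symbolic. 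With this identity in hand, part (1) amounts to showing $g\phi_i\in H^{s_0}\cap L^2_{s_0}\cap\mathcal L^\infty$ with $(a_k^i)_{k\in\Z^d}\in\ell^2_{s_0}$, so that the sum indeed defines an element of $\mathcal V_{s_0}(g\phi_1,\ldots,g\phi_m)=V_{s_0}(g\phi_1,\ldots,g\phi_m)$.

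For part (1) the key point is the loss of smoothness index from $s$ to $s_0=\min\{p,s\}$. Since $g^{(\alpha)}\in L^\infty$ only for $|\alpha|\leqslant p$, multiplication by $g$ preserves Sobolev regularity only up to order $p$; combined with $\phi_i\in H^s$, the product $g\phi_i$ lies in $H^{s_0}$. I would check the $L^2_{s_0}$ and $\mathcal L^\infty$ membership similarly, using that $g$ is bounded and periodic (so $g\phi_i$ inherits the weighted-integrability and the $\mathcal L^\infty$ summability of $\phi_i$, since the periodic factor does not spoil the sum $\sup_{x\in\T^d}\sum_{j}|g\phi_i(x+j)|$). Because $s_0\leqslant s$ and $\ell^2_s\subseteq\ell^2_{s_0}$ (the third bulleted monotonicity fact before Proposition~\ref{t1}), the coefficient sequences $(a_k^i)$ automatically lie in $\ell^2_{s_0}$. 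This places $gf$ in $\mathcal V_{s_0}(g\phi_1,\ldots,g\phi_m)$, which equals $V_{s_0}(g\phi_1,\ldots,g\phi_m)$ once $(A)$ is checked for the new generators.

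Part (2) is then the continuity of the map, which I would obtain by transferring the question to the coefficient spaces exactly as is done throughout Section~5: the correspondence $f\leftrightarrow((a_k^i)_{k\in\Z^d})_{i=1}^m$ identifies $V_s(\phi_1,\ldots,\phi_m)$ with $(\ell^2_s)^m$, and $gf\leftrightarrow((a_k^i)_{k\in\Z^d})_{i=1}^m$ identifies the image with the same sequences viewed in $(\ell^2_{s_0})^m$. Since the map on sequences is the identity inclusion $(\ell^2_s)^m\hookrightarrow(\ell^2_{s_0})^m$, which is bounded because $s_0\leqslant s$, continuity follows immediately. For part (3), I would write $gf=\sum_i (g\phi_i)*h_i$ with $h_i=\sum_k a_k^i\delta(\cdot-k)$ and apply \eqref{wfcon} termwise, exactly as in the proof of Proposition~\ref{WFcon}(3): each summand $\sum_k a_k^i(g\phi_i)(\cdot-k)$ has wave front contained in $\{(x+k,\xi):(x,\xi)\in WF(g\phi_i),\,k\in\Z^d\}$, and the wave front of a finite sum is contained in the union of the wave fronts, giving the stated inclusion.

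The main obstacle I anticipate is not any of the three parts individually but the verification that condition $(A)$ holds for the new generators $g\phi_1,\ldots,g\phi_m$, which is needed to justify writing $V_{s_0}$ (rather than only $\mathcal V_{s_0}$) in the conclusion. The regularity memberships $g\phi_i\in H^{s_0}\cap L^2_{s_0}\cap\mathcal L^\infty$ are routine, but the closedness of $\mathcal V_{s_0}(g\phi_1,\ldots,g\phi_m)$ and of its Fourier image in $L^2_{s_0}$ does not follow automatically from the closedness for the $\phi_i$; multiplication by $g$ can in principle destroy the frame property. I would therefore either argue that multiplication by a periodic $g$ acts as a bounded invertible operator intertwining the two frame systems (so the frame property is preserved), or else state the conclusion in terms of $\mathcal V_{s_0}$ and invoke the equivalence $V_{s_0}=\mathcal V_{s_0}$ only under the additional hypothesis that $(A)$ holds for the products. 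Handling this closedness issue cleanly is the crux of the argument.
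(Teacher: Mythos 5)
Your proof follows essentially the same route as the paper's: $g\phi_i\in H^{s_0}$ via the Leibniz rule and \eqref{zapr}, the inclusion $\ell^2_s\subseteq\ell^2_{s_0}$ for part (1), the continuity of that inclusion on coefficient sequences for part (2), and the representation $gf=\sum_i(g\phi_i)*h_i$ with \eqref{wfcon} for part (3). The one point where you diverge is in treating the verification of condition $(A)$ for the new generators $g\phi_1,\ldots,g\phi_m$ as the crux: this is not actually needed, because the conclusion only asserts membership in $V_{s_0}(g\phi_1,\ldots,g\phi_m)$, which is by definition the closure of the span, and the containment $\mathcal V_{s_0}(g\phi_1,\ldots,g\phi_m)\subseteq V_{s_0}(g\phi_1,\ldots,g\phi_m)$ holds as soon as $g\phi_i\in L^2_{s_0}\cap\mathcal L^\infty$ (the easy inclusion recorded in Section 3); the equality $V_{s_0}=\mathcal V_{s_0}$, which would require the closedness hypotheses, is never invoked, and the paper's proof accordingly passes over this issue.
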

\begin{proof} (1) Since each $\phi_i\in H^s$, it follows that $(g\phi_i)^{(\beta)}\in L^2$, for every $\beta\in\mathbb N_0^d$, $|\beta|\leqslant s_0$, $i=1,\ldots,m$, by condition $\eqref{zapr}$. Thus, $g\phi_i\in H^{s_0}$, $i=1,\ldots,m$. %Also, $g\phi_i\in L_{s_0}^2\cap\mathcal L^\infty$, $i=1,\ldots,m$. 
Furthermore, $(a_k^i)_{k\in\Z^d}\in\ell^2_s\subseteq\ell^2_{s_0}$, $i=1,\ldots,m$, which implies that
$$gf=\sum_{i=1}^m\sum_{k\in\mathbb Z^d}a_k^i(g\phi_i)(\cdot-k)
\in V_{s_0}(g\phi_1,\ldots,g\phi_m).$$

(2) Let $((a_{k,n}^i)_{k\in\Z^d})_{n\in\Z^d}$ be a sequence converging to $(a_k^i)_{k\in\Z^d}$ in $\ell^2_s$ as $n\rightarrow \infty$, $i=1,\ldots,m$. Since the embedding $\ell^2_s\hookrightarrow\ell^2_{s_0}$ is continuous, the convergence also holds in $\ell^2_{s_0}$, which implies the continuity of the mapping in \eqref{cop}.

(3) The assertion follows directly from \eqref{wfcon}.
\end{proof}
%\begin{remark} If we omit both conditions \eqref{zapr} and $(A)$, the statements in Proposition $\ref{p10}$ will hold only for the spaces $V_s$, where $s_0=s$.
%\end{remark}

\vskip1cm

{\bf Acknowledgment.}
S. Pilipovi\' c is supported by the Serbian Academy of Sciences and
Arts, project F10. A. Aksentijevi\' c and S. Aleksi\' c were supported by the
Science Fund of the Republic of Serbia, $\#$GRANT No $2727$, {\it Global
and local analysis of operators and distributions}  (GOALS), and they appreciate the financial support of the Ministry of
Science, Technological Development and Innovation of the Republic
of Serbia (Grants No. 451-03-136/2025-03/200122 and 451-03-137/2025-03/200122).

\end{document}